\theoremstyle{plain}
\newtheorem{fact}{FACT}[section]
\newtheorem{theorem}[fact]{Theorem}
\newtheorem{lemma}[fact]{Lemma}
\newtheorem{proposition}[fact]{Proposition}
\newtheorem{corollary}[fact]{Corollary}
\theoremstyle{definition}
\newtheorem{definition}[fact]{Definition}
\newtheorem{rmks}[fact]{Remarks}
\newtheorem{example}[fact]{Example}
\newcommand{\LL}{\mathscr{L}}
\newcommand{\EE}{\mathscr{E}}
\newcommand{\GG}{\mathcal{G}}
\newcommand{\dd}{\operatorname{d}}
\newcommand{\CS}{\mathscr{S}}
\newcommand{\TG}{\mathscr{T}_G}    
\newcommand{\mJ}{\mathcal{J}^*}
\newcommand{\ZZ}{\mathbb{Z}}
\newcommand{\NN}{\mathbb{N}}
\newcommand{\KK}{\mathbb{K}}
\newcommand{\bt}{\mathbf{t}}
\newcommand{\dcup}{\operatorname{\mathbin{\dot{\cup}}}}
\title[On the minimal generating sets of the Eulerian ideal]{On the minimal generating sets \\ of the Eulerian ideal}
\author{Jorge Neves}
\address[Jorge Neves]{University of Coimbra, Department of Mathematics, CMUC, 3000-143 Coimbra, Portugal}
\email{neves@mat.uc.pt}
\author{Gon\c{c}alo Varej\~{a}o}
\address[Gon\c{c}alo Varej\~{a}o]{University of Coimbra, Department of Mathematics, CMUC, 3000-143 Coimbra, Portugal}
\email{g.varejao@mat.uc.pt}
\thanks{Partially supported by the Centre for Mathematics of the University of Coimbra (funded by the Portuguese Government through FCT/MCTES, DOI 10.54499/UIDB/00324/2020).
Partially supported by the Centre for Mathematics of the University of Coimbra - UIDB/00324/2020,
funded by the Portuguese Government through FCT/MCTES and by the Portuguese Republic / MCTES,
the ESF, and the POR Centro, through FCT – Funda\c{c}\~{a}o para a Ci\^{e}ncia e a Tecnologia, I.P., DOI 10.54499/2021.05420.BD.
The authors use Macaulay2 \cite{M2} in the computations of examples.}
\thanks{\emph{Data availability}. Data sharing not applicable to this article as no datasets were generated or analyzed during the current study.}
\begin{document}

\maketitle

\begin{abstract}
We study the minimal homogeneous generating sets of the Eulerian ideal associated with a simple graph 
and its maximal generating degree. We show that the Eulerian ideal is a lattice ideal and use this to  
give a characterization of binomials belonging to a minimal homogeneous generating set. In this way, 
we obtain an explicit minimal homogeneous generating set. We find an upper bound for the 
maximal generating degree in terms of the graph. This invariant is half the number of 
edges of a largest Eulerian subgraph of even cardinality without even-chords. 
We show that for bipartite graphs this invariant is the maximal generating degree. 
In particular, we prove that if the graph is bipartite, the Eulerian ideal 
is generated in degree $2$ if and only if the graph is chordal. Furthermore, 
we show that the maximal generating degree is also $2$ when the graph is a complete 
graph.
\end{abstract}

\section{Introduction}

Currently, one of the most popular research areas in commutative algebra is the
study of properties of ideals, of the ring of polynomials over a field, associated with graphs. 
This originated with the famous paper \cite{OtIToG} and has 
motivated the study of several other ideals associated with graphs. 
Among these, the Eulerian ideal is a recent addition and the focus of this paper.
It was defined in \cite{NePiVi}, by the first author, together with Vaz Pinto and Villarreal, 
inspired by the vanishing ideal parameterized by a graph, 
introduced in \cite{ReSiVi}. The Eulerian ideal is a binomial ideal of a polynomial 
ring whose variables are indexed by the edges of a simple graph. 
It is known that this ideal has a generating set that contains a set 
of binomials canonically associated with the Eulerian subgraphs of the graph --- 
see Subsection~\ref{subsec: the ideal} --- and that, moreover, this ideal contains 
the toric ideal associated with the graph.

Several properties of Eulerian ideals have been studied.
In \cite{NePiVi}, the quotient of the polynomial ring by it was
shown to be a Cohen-Macaulay graded ring of dimension $1$; in \cite{Neves}, a Gröbner basis for the ideal was given and
a combinatorial characterization of its Castelnuovo-Mumford regularity and degree was derived; in \cite{NeVa}, the ideal and the above results were generalized for $k$-uniform hypergraphs 
and the Hilbert function of the quotient was characterized by 
a combinatorial formula and computed in some cases; finally, in \cite{Neves1},
a monomial basis of the socle of the quotient and a description of the socle degrees was found.  

Through all of these results the graph invariant that appears in common, and that translates the properties of the Eulerian ideal, is the notion of $T$-join. Given a subset, $T$, of vertices of the graph, 
a $T$-join is a subset of edges, $J$, such that, in the subgraph with edge set $J$, $T$ is the set of odd degree vertices. We will focus on $T$-joins with cardinality of a fixed parity. For more on $T$-joins, see \cite{KoVyCoOp} and Subsection~\ref{subsec: joins}, below.

This work has two main goals; to characterize minimal binomial generating sets of the Eulerian ideal and to study its  
maximal generating degree. In the case of the toric ideal of a graph, an explicit description of the minimal generating sets, in terms of the closed even walks of the graph, was obtained in \cite[Theorem 4.13]{ReTaTh}. For the Eulerian ideal,
we have found that a minimal generating set consists of binomials that identify $T$-joins that are not equivalent under a certain equivalence relation --- see
Definition~\ref{def: equivalent m c (T,p)-joins} and
Theorem~\ref{thrm: necessary condition for minimal binomials}.  
The equivalence relation is an adaptation of the notion of Markov basis, as introduced by Diaconis and Sturmfels in \cite{DiSt} and used in \cite[Theorem 4.12]{CTV1} to obtain a description of all minimal generating sets of binomials
of a lattice ideal. Since, as we show in Proposition~\ref{prop: lattice description}, 
the Eulerian ideal is a lattice ideal, in Theorem~\ref{thrm: the minimal generating set of IG},
we use this result to obtain an explicit minimal generating set for the Eulerian ideal.

Regarding the maximal generating degree, we start by showing that this invariant 
is $2$ for complete graphs --- see Theorem~\ref{thrm: complete graphs}. 
Then, in Theorem~\ref{thrm: d(IG)=d}, by showing that the binomials associated with
even cycles with an even-chord or with edge-disjoint unions of two odd cycles with at most 
one vertex in common that possess an even-chord are not needed in a minimal generating set of the Eulerian ideal, we 
obtain an upper bound for the maximal generating degree.
This result is analogous to the results \cite[Lemmas 3.1, 3.2 and Theorem 1.2]{OH1999} for the toric ideal.
As an application, in Corollary~\ref{cor: chordal bipartite graphs}, we show that if the graph is bipartite, then 
the Eulerian ideal is generated in degree $2$ if and only if the graph is chordal.

This paper is organized as follows.
In Section~\ref{sec: Preliminaries}, we define the ideal, the graph theoretical notions that we need, exhibit the lattice associated with the Eulerian ideal and recall two background results: the characterization of its binomials and the description of a Gröbner basis.
In Section~\ref{sec: A minimal generating set}, we characterize when a binomial in the 
Eulerian ideal is in the ideal generated by binomials of lower degree 
and we describe a minimal generating set.
In Section~\ref{sec: Generating degrees}, we give an upper bound for the maximal generating degree of the Eulerian ideal and show that this invariant is $2$ for bipartite chordal graphs and complete graphs.

\section{Preliminaries}\label{sec: Preliminaries}

Let $G=(V_G,E_G)$ be a simple graph. Let us assume that $V_G=\{1,\dots,n\}$ and that 
$s=|E_G|>0$. Let $\KK$ denote a field and let us  
consider the polynomial rings $\KK[x_1,\dots,x_n]$ and 
$\KK[E_G]=\KK[t_e : e\in E_G]$. Throughout, we use the multi-index notation for monomials in a polynomial 
ring, so that, for example, if $\alpha\in \NN^{E_G}$, 
$$\textstyle\bt^\alpha=\prod\limits_{e\in E_G} t_e^{\alpha(e)}.$$
Also, if $J\subseteq E_G$, we denote by $\bt_J$ the product of all 
variables $t_e$, with $e\in J$.

\subsection{The ideal}\label{subsec: the ideal}
Let $\varphi$ be the ring homomorphism $\KK[E_G]\to \KK[x_1,\dots,x_n]$ that maps $t_e \mapsto x_i x_j$, for every $e=\{i,j\}$ in $E_G$. 
\begin{definition}\label{def: the ideal}
The Eulerian ideal of $G$ is the ideal of $\KK[E_G]$ defined by
$$I(G)=\varphi^{-1}(x_i^2-x_j^2 : i,j\in V_G).$$
\end{definition}
Given two edges of $G$, $e=\{i,j\}$ and $\ell=\{u,v\}$, it is easy to see that the binomial 
$t_{e}^2-t_{\ell}^2$ is in $I(G)$,
$$\textstyle\varphi(t_e^2-t_\ell^2)=x_i^2 x_j^2-x_u^2x_v^2=x_i^2 (x_j^2-x_u^2)+x_u^2 (x_i^2-x_v^2).$$
Therefore the Eulerian ideal is trivial if and only $G$ has only one edge.
From now on let us assume that $|E_G|\geq 2$. 

To describe a generating set of $I(G)$, let us fix some terminology.
For a set of edges $J\subseteq E_G$ and a vertex $i\in V_G$, we define the degree 
of $i$ in $J$ as 
$$
\textstyle\deg_J (i)= \sum\limits_{e\in J} |e \cap \{i\}|.
$$
We say that a subset of edges $C\subseteq E_G$ is Eulerian if $\deg_C (i)$ is even, for every $i\in V_G$. 
Given an Eulerian subset $C$ of even cardinality and a partition $C=J\dcup K$ into 
sets of same cardinality, we form a homogeneous binomial in $\KK[E_G]$,
$$
\textstyle \bt_J-\bt_K.
$$
Binomials obtained in this way are called Eulerian binomials associated with $C$.
Let us denote by $\EE$ the set of all Eulerian binomials, associated with 
all such $C$ and choices of partitions. We recall from \cite[Theorem 3.3]{Neves} the following result.
\begin{theorem}
The set 
$
\{t_e^2-t_\ell^2 : e,\ell\in E_G\}\cup \EE
$
is a Gröbner basis for $I(G)$, with respect to 
the graded reverse lexicographic order on $\KK[E_G]$, associated with any choice 
of ordering of $E_G$.
\end{theorem}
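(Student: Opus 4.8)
The plan is to prove the statement through the initial-ideal criterion: writing $L$ for the monomial ideal generated by the leading terms of the binomials in the proposed set, I must show $\operatorname{in}(I(G))=L$ with respect to the fixed graded reverse lexicographic order. Since the inclusion $L\subseteq \operatorname{in}(I(G))$ is immediate, and since $I(G)$ is generated by binomials, it suffices to prove that the leading term of every binomial $\bt^\alpha-\bt^\beta\in I(G)$ with $\alpha$ and $\beta$ of disjoint support lies in $L$. The backbone is the characterization of such binomials: working modulo $(x_i^2-x_j^2:i,j\in V_G)$, two monomials in $\KK[x_1,\dots,x_n]$ are congruent if and only if they have the same degree and the same parity in each variable; applying $\varphi$, one gets that $\bt^\alpha-\bt^\beta\in I(G)$ precisely when $\sum_{e}\alpha(e)=\sum_e\beta(e)$ and $\sum_{e\ni i}\alpha(e)\equiv\sum_{e\ni i}\beta(e)\pmod 2$ for every $i\in V_G$. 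I would establish this first, noting that it is exactly the lattice description of $I(G)$.

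Next I would record the leading terms of the two families. For $t_e^2-t_\ell^2$ the leading term is $t_e^2$ when $t_e$ is the larger of the two variables, because a higher power of a smaller variable makes a monomial grevlex-smaller; consequently $t_e^2\in L$ for every edge $e$ except the single smallest variable, which I denote $e_0$. This observation dispatches the case in which the leading monomial $\bt^\alpha$ is not squarefree: it is then divisible by some $t_e^2$, and necessarily $e\neq e_0$, for if the only repeated variable were $e_0$ then $\bt^\beta$ (which has exponent $0$ at $e_0$, by disjointness) would be grevlex-larger than $\bt^\alpha$, contradicting that $\bt^\alpha$ is the leading term. Hence $\bt^\alpha\in(t_e^2)\subseteq L$.

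It remains to treat a squarefree leading monomial $\bt^\alpha=\bt_J$. Let $b=\beta$ and let $b''\in\{0,1\}^{E_G}$ be its vector of parities. The parity condition above gives $\deg_J(i)\equiv\deg_{b''}(i)\pmod 2$ for all $i$, so the set $C=J\dcup\supp(b'')$ (a genuine disjoint union, as $J$ and $\supp b\supseteq\supp b''$ are disjoint) is Eulerian; the degree condition forces $|C|$ to be even. Since any equal-cardinality partition of an Eulerian subset of even cardinality yields a binomial in $\EE$, it suffices to produce a partition $C=J_1\dcup J_2$ with $|J_1|=|J_2|$, $J_1\subseteq J$, and $\bt_{J_1}>\bt_{J_2}$: then $\bt_{J_1}$ is the leading term of an Eulerian binomial and divides $\bt_J$, so $\bt_J\in L$. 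A counting check shows $|J|\geq |C|/2$, so there is room to choose $J_1$ inside $J$.

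The content is to choose $J_1$ so that $\bt_{J_1}$ is the larger half, and I expect this selection to be the main obstacle: it must be made compatibly with the grevlex order, exploiting that the grevlex-smallest edge of $C$ is forced to lie outside $J$ (this is precisely what $\bt_J>\bt^\beta$ says about the last nonzero coordinate of the exponent difference). I would carry it out by moving the grevlex-largest available edges of $J$ into $J_1$ and verifying, coordinate by coordinate against the grevlex order, that the resulting half dominates its complement; the disjoint-support hypothesis and the location of the smallest edge of $C$ are what make the verification go through.
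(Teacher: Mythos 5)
Your proposal cannot be compared against an internal argument, because the paper never proves this statement: it is recalled verbatim from \cite[Theorem 3.3]{Neves}. Judged on its own, your skeleton is sound --- the reduction to binomials with coprime terms (legitimate, since by your degree-and-parity characterization cancelling a common monomial factor from a binomial of $I(G)$ leaves a binomial of $I(G)$, and a binomial ideal admits a binomial Gr\"obner basis), and the disposal of non-squarefree leading terms via the quadrics $t_e^2-t_\ell^2$, are both correct. The genuine gap is in the step you yourself flag as the crux, and specifically in the claim you rest it on: that $\bt_J>\bt^\beta$ forces the grevlex-smallest edge of $C=J\dcup\supp(b'')$ to lie outside $J$. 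This is false. What $\bt_J>\bt^\beta$ actually says is that the smallest edge of $J\cup\supp(\beta)$ lies in $\supp(\beta)$; but $\supp(\beta)$ also contains the edges of \emph{even} exponent in $\beta$, and those edges are not in $C$. Concretely, in the graph of Figure~\ref{fig: A bipartite graph}, order the edges so that $t_{25}>t_{23}>t_{34}>t_{45}>t_{56}>t_{16}>t_{12}$, and take $f=t_{23}t_{34}t_{45}-t_{25}t_{12}^2\in I(G)$ (both $J=\{\{2,3\},\{3,4\},\{4,5\}\}$ and $K=\{\{2,5\}\}$ are $(\{2,5\},1)$-joins and $f$ is homogeneous). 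The leading term is the squarefree monomial $\bt_J=t_{23}t_{34}t_{45}$, because the last nonzero coordinate of the exponent difference sits at the smallest variable $t_{12}$ and equals $-2$; yet the smallest edge of $C=\{\{2,3\},\{3,4\},\{4,5\},\{2,5\}\}$ is $\{4,5\}$, which lies in $J$.

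The construction can be repaired, but only by a case distinction you never make, and it is exactly where the content of the theorem lies. Write $\bt^\beta=\bt^{2\nu}\bt_K$, so that $K=\supp(b'')$ in your notation. If $\nu=0$, then $f=\bt_J-\bt_K$ is itself an Eulerian binomial and, in this case only, your claim is true: $\supp(\beta)=K$, so $\min(C)\in K$ and $\operatorname{in}(f)=\bt_J\in L$ directly. If $\nu\neq0$, homogeneity gives $|J|=|K|+2|\nu|$, hence $|J|\geq\frac{|C|}{2}+1$: there is enough slack to choose $J_1\subseteq J$ with $|J_1|=\frac{|C|}{2}$ and $\min(C)\notin J_1$. (Note $\min(C)\in J$ forces $\min(C)=\min(J)$, so your recipe of taking the grevlex-largest $\frac{|C|}{2}$ edges of $J$ achieves this automatically --- but for this reason, not for the reason you give.) Then $J_2=C\setminus J_1$ contains $\min(C)$, so $\bt_{J_1}>\bt_{J_2}$ and $\bt_{J_1}$ is the leading term of an Eulerian binomial dividing $\bt_J$; in the example above, $J_1=\{\{2,3\},\{3,4\}\}$ and $J_2=\{\{4,5\},\{2,5\}\}$ do the job. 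As submitted, however, the decisive verification is only promised (``I expect this selection to be the main obstacle''), and the one concrete assertion offered in its support is wrong, so the proposal is not yet a proof.
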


\subsection{Joins}\label{subsec: joins} 
Given $T\subseteq V_G$, a $T$-join is a subset of edges, $J\subseteq E_G$, such that  
$$
T=\{v\in V_G : \deg_J (v) \text{ is odd}\}.
$$
If $p\in \ZZ_2$, we say that $J$ is a $(T,p)$-join whenever $|J|+2\ZZ=p$.
Throughout, we abbreviate $0+2\ZZ$ and $1+2\ZZ$ by $0$ and $1$, respectively. 
It is clear that, given any subset of edges $J\subseteq E_G$, there is a unique pair
$(T,p)$ such that $J$ is a $(T,p)$-join. For example, an Eulerian subset of even cardinality 
is an $(\emptyset,0)$-join. However, given a pair $(T,p)$, there may or may not exist a $(T,p)$-join.
For one, by \cite[Proposition 12.7]{KoVyCoOp}, if a $(T,p)$-join exists,
then the intersection of $T$ with the vertex set of each connected component of the graph must have even cardinality.

\begin{example}
Let us consider the graph in Figure~\ref{fig: A bipartite graph}.
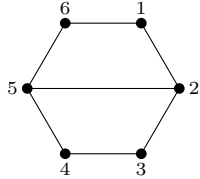
\begin{figure}[ht]
\begin{center}
\begin{tikzpicture}

\coordinate (P1) at (.5,0.8660254038);
\coordinate (P2) at (1,0);
\coordinate (P3) at (.5,-0.8660254038);
\coordinate (P4) at (-.5,-0.8660254038);
\coordinate (P5) at (-1,0);
\coordinate (P6) at (-.5,0.8660254038);

\draw (P1)--(P2);
\draw (P1)--(P6);
\draw (P2)--(P3);
\draw (P3)--(P4);
\draw (P4)--(P5);
\draw (P5)--(P6);
\draw (P2)--(P5);

\foreach \i in {1,...,6} {\fill[color=black] (P\i) circle (2pt);}

\foreach \i in {1} {\draw  (P\i) node[anchor=south] {$\scriptstyle \i$};}
\foreach \i in {2} {\draw  (P\i) node[anchor=west] {$\scriptstyle \i$};}
\foreach \i in {3} {\draw  (P\i) node[anchor=north] {$\scriptstyle \i$};}
\foreach \i in {4} {\draw  (P\i) node[anchor=north] {$\scriptstyle \i$};}
\foreach \i in {5} {\draw  (P\i) node[anchor=east] {$\scriptstyle \i$};}
\foreach \i in {6} {\draw  (P\i) node[anchor=south] {$\scriptstyle \i$};}

\end{tikzpicture}
\end{center}
\caption{A hexagon with an even-chord.} 
\label{fig: A bipartite graph}
\end{figure} 
Set $T=\{2,5\}$.
Then $\{\{2,5\}\}$ is a $(T,1)$-join, as are 
$\{\{1,2\},\{1,6\},\{5,6\}\}$ and $\{\{2,3\},\{3,4\},\{4,5\}\}.$
However, this graph has no $(T,0)$-joins, for $T=\{2,5\}$.
\end{example}

Throughout, let us denote by $\Delta$ the symmetric difference of two sets.
By an elementary argument (see, for example, \cite[Proposition 12.6]{KoVyCoOp}), 
one can show that if $J$ is a $T$-join and $J'$ is a $T'$-join
then $J\Delta J'$ is a $(T\Delta T')$-join. It follows that if $J$ is a $(T,p)$-join
and $J'$ is a $(T',p')$-join then $J\Delta J'$ is a $(T\Delta T',p+p')$-join.
It is then easy to show that, when $G$ is bipartite,
either there exists a $(T,0)$-join or there exists a $(T,1)$-join and, 
when $G$ is non-bipartite, there exist both a $(T,0)$-join and a $(T,1)$-join
(see \cite[Lemma 4.6]{Neves}). 

\smallskip

The relation between $(T,p)$-joins and $I(G)$ may be expressed as follows.

\begin{proposition}\label{prop: on the fibers}
For any monomials $\bt^\alpha=\bt^{2\mu}\bt_J$ and $\bt^\beta=\bt^{2\nu}\bt_K$, of $\KK[E_G]$, 
let 
$(T_1,p_1)$ and $(T_2,p_2)$ be such that 
$J$ is a 
$(T_1,p_1)$-join and $K$ is a  $(T_2,p_2)$-join.
Then $\bt^{\alpha}-\bt^{\beta}\in I(G)$
if and only if $(T_1,p_1)=(T_2,p_2)$ and $\bt^{\alpha}-\bt^{\beta}$ is homogeneous.
\end{proposition}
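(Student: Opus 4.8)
The plan is to prove the proposition by analyzing the structure of $I(G)$ via the map $\varphi$, exploiting the fact that $\varphi$ sends $t_e \mapsto x_i x_j$ so that the image of a monomial $\bt^\alpha$ records, for each vertex $v$, the total degree $\sum_{e \ni v}\alpha(e)$ of $x_v$. The key observation is that the parity of this exponent of $x_v$ in $\varphi(\bt^\alpha)$ is exactly $\deg_J(v) \bmod 2$ when $\alpha = 2\mu + \mathbf{1}_J$, since the $2\mu$ part contributes even exponents. So the odd-degree vertex set of $J$ is encoded by which variables $x_v$ appear to an odd power in $\varphi(\bt^\alpha)$.

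First I would recall the target ideal $(x_i^2 - x_j^2 : i,j \in V_G)$ and note that modulo it, every $x_v^2$ is identified with a single element, say $x_1^2 =: w$; more precisely the quotient $\KK[x_1,\dots,x_n]/(x_i^2-x_j^2)$ has a basis given by squarefree monomials in the $x_v$ times powers of $w$. Thus $\varphi(\bt^\alpha)$ and $\varphi(\bt^\beta)$ are congruent modulo $(x_i^2-x_j^2)$ if and only if they have the same squarefree part (as a set of variables) and the same total degree. The squarefree part of $\varphi(\bt^\alpha) = \prod_v x_v^{\sum_{e\ni v}\alpha(e)}$ is precisely $\prod_{v \in T_1} x_v$, and the total degree is $2|\alpha| = 2(2|\mu|+|J|)$. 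Therefore $\varphi(\bt^\alpha) \equiv \varphi(\bt^\beta) \pmod{(x_i^2-x_j^2)}$ iff $T_1 = T_2$ and $|\alpha| = |\beta|$.

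Now I would translate this back. By definition $\bt^\alpha - \bt^\beta \in I(G) = \varphi^{-1}(x_i^2-x_j^2)$ means exactly $\varphi(\bt^\alpha) - \varphi(\bt^\beta) \in (x_i^2-x_j^2)$, i.e.\ $\varphi(\bt^\alpha) \equiv \varphi(\bt^\beta)$ in the quotient. For the backward direction, $(T_1,p_1)=(T_2,p_2)$ forces $T_1=T_2$, so the squarefree parts agree, and homogeneity of $\bt^\alpha - \bt^\beta$ gives $|\alpha|=|\beta|$, so the degrees agree and the congruence holds. For the forward direction, the congruence gives $T_1 = T_2$ and $|\alpha| = |\beta|$; the latter yields both homogeneity and, reducing $\bmod 2$, that $|J| \equiv |\alpha| = |\beta| \equiv |K|$, hence $p_1 = p_2$. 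Combined with $T_1=T_2$ this gives $(T_1,p_1)=(T_2,p_2)$.

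The main subtlety — what I expect to be the crux — is rigorously pinning down the basis of the quotient ring $\KK[x_1,\dots,x_n]/(x_i^2-x_j^2)$ and verifying that two monomials are congruent precisely when they share a squarefree part and total degree; one must be careful that the relations $x_i^2-x_j^2$ can only swap squares of variables, never alter the squarefree support or the total degree, and that this is an \emph{iff}. A clean way to do this is to note that $\KK[x_1,\dots,x_n]/(x_i^2-x_j^2) \cong \KK[x_1,\dots,x_n, w]/(x_v^2 - w : v)$ under $x_v^2 \mapsto w$, which is a free module over $\KK[w]$ with basis the squarefree monomials $\prod_{v\in S} x_v$; from this the monomial-congruence criterion is immediate, and the rest is the bookkeeping sketched above.
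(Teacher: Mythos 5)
Your proof is correct and complete. Note that the paper itself offers no self-contained argument for this proposition: its ``proof'' is a pointer to Proposition 3 of \cite{NeVa}, so your write-up supplies an actual in-place proof where the paper defers to a citation. Your argument is the natural structural one, and the crux you flagged is handled rigorously: the isomorphism $\KK[x_1,\dots,x_n]/(x_i^2-x_j^2 : i,j\in V_G)\cong \KK[x_1,\dots,x_n,w]/(x_v^2-w : v\in V_G)$ does hold (the two evident maps, $w\mapsto x_1^2$ in one direction and $x_v\mapsto x_v$ in the other, are mutually inverse), and the latter ring decomposes as the tensor product over $\KK[w]$ of the factors $\KK[w][x_v]/(x_v^2-w)$, hence is a free $\KK[w]$-module on the squarefree monomials. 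Therefore every monomial of $\KK[x_1,\dots,x_n]$ maps to a single $\KK$-basis element $w^k\prod_{v\in S}x_v$, and two monomials are congruent modulo $(x_i^2-x_j^2)$ if and only if they have the same odd-exponent support $S$ and the same total degree $2k+|S|$; this is exactly the \emph{iff} one needs. The bookkeeping is then right: the exponent of $x_v$ in $\varphi(\bt^{2\mu}\bt_J)$ is $2\sum_{e\ni v}\mu(e)+\deg_J(v)$, so the odd support is $T_1$; the total degree is twice $\deg(\bt^\alpha)$, so equality of degrees is homogeneity; and in the forward direction $p_1=p_2$ follows from $|J|\equiv |\alpha|=|\beta|\equiv |K| \pmod 2$, since $|\alpha|=2|\mu|+|J|$ and $|\beta|=2|\nu|+|K|$. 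The only stylistic caveat is that, had the referenced proof in \cite{NeVa} been available for comparison, it proceeds in a similar spirit (analyzing images under $\varphi$ modulo the ideal of differences of squares), so your route should be regarded as a self-contained reconstruction rather than a divergent method.
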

\begin{proof}
See the proof of \cite[Proposition 3]{NeVa}. 
\end{proof}

By \cite[Theorem 4.3]{Neves}, the reduction of a monomial by the Gröbner basis 
identifies a unique $(T,p)$-join of minimal cardinality. Moreover, 
$(T,p)$-joins of minimum cardinality are parity joins in the sense of 
\cite[Definition 4.11]{Neves}.
These notions are used to express the regularity of $I(G)$ --- 
\cite[Theorem 4.13]{Neves} --- and also to identify 
a set of standard monomials for the quotient of the polynomial ring by $I(G)$
--- \cite[Theorem 4.9]{Neves} and \cite[Proposition 2.7]{Neves1}.

\begin{definition}
Let us denote by $\TG$ the set of all pairs $(T,p)$ for which there exists a 
$(T,p)$-join.
Given a pair $(T,p)\in \TG$, we denote by
$\mJ(T,p)$ the set of all $(T,p)$-joins with minimum cardinality.  
\end{definition}

\begin{proposition}\label{prop: char parity joins}
Given $(T,p)\in \mathscr{T}_G$ and $J$ a $(T,p)$-join, the following are equi\-va\-lent:
\begin{enumerate}
\item $J\in \mJ(T,p)$;
\item for every even cardinality Eulerian subset $C\subseteq E_G$,
$$\textstyle|J\Delta C|\geq |J| \iff |J\cap C|\leq \frac{|C|}{2}.$$
\end{enumerate}
\end{proposition}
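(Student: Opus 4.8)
The plan is to translate the minimality of $J$ into a statement about all $(T,p)$-joins, and then to rewrite that statement combinatorially using the bijection $C\mapsto J\Delta C$ together with a single inclusion--exclusion count.

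First I would observe that the biconditional displayed in (ii) is, for each fixed $C$, a tautology: since $J\Delta C=(J\setminus C)\dcup(C\setminus J)$ we have $|J\Delta C|=|J|+|C|-2|J\cap C|$, so $|J\Delta C|\ge |J|$ holds exactly when $|C|-2|J\cap C|\ge 0$, i.e. when $|J\cap C|\le |C|/2$. Thus condition (ii) carries content only through its quantifier: it is equivalent to the single assertion that $|J\Delta C|\ge |J|$ for every even cardinality Eulerian subset $C$.

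Next I would set up the bijection. An even cardinality Eulerian subset is precisely an $(\emptyset,0)$-join, so, by the additivity of joins under symmetric difference recalled before Proposition~\ref{prop: on the fibers}, the set $J\Delta C$ is again a $(T\Delta\emptyset,p+0)=(T,p)$-join for every such $C$. Conversely, if $J'$ is any $(T,p)$-join then $J\Delta J'$ is a $(T\Delta T,p+p)=(\emptyset,0)$-join, that is, an even cardinality Eulerian subset, and $J\Delta(J\Delta J')=J'$. Hence $C\mapsto J\Delta C$ is a bijection from the even cardinality Eulerian subsets onto the set of all $(T,p)$-joins, sending $\emptyset$ to $J$ itself.

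Finally I would combine the two steps. By definition $J\in\mJ(T,p)$ means $|J|\le |J'|$ for every $(T,p)$-join $J'$; letting $J'$ range over the image of the bijection above, this says exactly that $|J|\le |J\Delta C|$ for every even cardinality Eulerian subset $C$, which by the first step is condition (ii). I do not expect a genuine obstacle here: the argument is essentially the counting identity together with the join-additivity rule. The only points demanding care are recognizing that the inner biconditional in (ii) is automatic (so that the real content lies in the universal quantifier), and checking that $C\mapsto J\Delta C$ is \emph{onto} the set of all $(T,p)$-joins, so that the family of inequalities $|J|\le|J\Delta C|$ genuinely captures minimality of $J$ against all competitors $J'$.
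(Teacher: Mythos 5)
Your proposal is correct and follows essentially the same route as the paper: both directions rest on the facts that the symmetric difference of a $(T,p)$-join with an even cardinality Eulerian subset (an $(\emptyset,0)$-join) is again a $(T,p)$-join, and that the symmetric difference of two $(T,p)$-joins is an even cardinality Eulerian subset, so your bijection $C\mapsto J\Delta C$ is just a compact packaging of the paper's two implications. Your preliminary observation that the displayed biconditional in (ii) is a counting tautology, so that (ii) must be read as asserting $|J\Delta C|\geq |J|$ (equivalently $|J\cap C|\leq |C|/2$) for every such $C$, correctly pins down the intended reading, which the paper's shorter proof uses implicitly.
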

\begin{proof}
If $J$ is a $(T,p)$-join and $C$ is an even cardinality Eulerian subset then 
$J\Delta C$ is also a $(T,p)$-join.
Hence $(i) \Rightarrow (ii)$.
If $K$ is another $(T,p)$-join then $J\Delta K$ is an even cardinality Eulerian set and
hence $(ii)\Rightarrow (i)$.
\end{proof}

\subsection{The lattice property}\label{subsec: The lattice property}
Fix $e_1,\dots,e_s$ an ordering of the edge set $E_G$.
The incidence matrix 
of $G$ is the $n\times s$ matrix $B$, where each entry $B_{ij}$ equals $1$ if the vertex $i$ is in the edge $e_j$, and $0$ otherwise. 
To identify the lattice associated with $I(G)$, let us use 
the induced identification of $\NN^{E_G}$ with $\NN^s$.

\begin{proposition}\label{prop: lattice description}
$I(G)$ is the lattice ideal associated with the lattice 
$$\textstyle \LL=\{\theta \in \ZZ^s : B\theta\in (2\ZZ)^n \text{ and } \sum_{e\in E_G} \theta(e)=0 \}.$$ 
\end{proposition}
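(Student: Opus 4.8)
The plan is to attach to each binomial $\bt^\alpha-\bt^\beta$ the integer vector $\theta=\alpha-\beta\in\ZZ^s$ and to match the membership condition of Proposition~\ref{prop: on the fibers} with membership of $\theta$ in $\LL$. Recall that the lattice ideal associated with $\LL$ is $I_\LL=(\bt^{\theta^+}-\bt^{\theta^-}:\theta\in\LL)$, where $\theta^+,\theta^-\in\NN^s$ denote the positive and negative parts of $\theta$, and that it is standard that a disjointly supported binomial $\bt^\alpha-\bt^\beta$ lies in $I_\LL$ exactly when $\alpha-\beta\in\LL$. First I would note that $\LL$ is indeed a sublattice of $\ZZ^s$ and that, because every $\theta\in\LL$ satisfies $\sum_e\theta(e)=0$, the ideal $I_\LL$ is homogeneous for the standard grading and contains no monomial.

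The core step is a reduction modulo $2$. Writing $\bt^\alpha=\bt^{2\mu}\bt_J$ means that $\alpha(e)=2\mu(e)$ for $e\notin J$ and $\alpha(e)=2\mu(e)+1$ for $e\in J$; hence, for each vertex $i$, one gets $(B\alpha)_i=\sum_{e\ni i}\alpha(e)\equiv\deg_J(i)\pmod 2$, so that $i\in T_1$ if and only if $(B\alpha)_i$ is odd. The same holds for $\beta$, $K$ and $T_2$. Consequently $T_1=T_2$ if and only if $(B\alpha)_i\equiv(B\beta)_i\pmod 2$ for every $i$, that is, $B\theta\in(2\ZZ)^n$. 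Moreover, $\bt^\alpha-\bt^\beta$ is homogeneous precisely when $\sum_e\alpha(e)=\sum_e\beta(e)$, i.e. $\sum_e\theta(e)=0$; and since $p_1\equiv\sum_e\alpha(e)$ and $p_2\equiv\sum_e\beta(e)\pmod 2$, homogeneity already forces $p_1=p_2$. Combining these observations with Proposition~\ref{prop: on the fibers} yields the key equivalence
\[
\bt^\alpha-\bt^\beta\in I(G)\quad\Longleftrightarrow\quad \alpha-\beta\in\LL.
\]

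Finally I would deduce both inclusions from this equivalence. For $I_\LL\subseteq I(G)$, each generator $\bt^{\theta^+}-\bt^{\theta^-}$ with $\theta\in\LL$ is a disjointly supported binomial with $\theta^+-\theta^-=\theta\in\LL$, hence lies in $I(G)$. For $I(G)\subseteq I_\LL$, I would use that $I(G)$ is generated by the binomial Gröbner basis $\{t_e^2-t_\ell^2\}\cup\EE$ of \cite[Theorem 3.3]{Neves}; each of these binomials $\bt^\alpha-\bt^\beta$ has disjoint support and lies in $I(G)$, so by the equivalence $\alpha-\beta\in\LL$ and the binomial belongs to $I_\LL$. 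The point requiring the most care is the bookkeeping in the central step: one must verify that the two parity data of a join, the set $T$ and the value $p$, are simultaneously recovered from $\theta$ by the single condition $B\theta\in(2\ZZ)^n$ together with homogeneity, noticing in particular that the $p$-component is not an independent constraint but is subsumed by homogeneity.
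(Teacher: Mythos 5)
Your proof is correct and takes essentially the same route as the paper's: both rest on Proposition~\ref{prop: on the fibers} combined with the mod-2 computation $(B\alpha)_i \equiv \deg_J(i) \pmod 2$, which translates the condition \enquote{$(T_1,p_1)=(T_2,p_2)$ and homogeneous} into $\alpha-\beta\in\LL$ (with the $p$-equality subsumed by homogeneity, exactly as in the paper). The only cosmetic difference is that you organize the conclusion as two inclusions, explicitly citing the Gr\"obner basis of \cite{Neves} to know that $I(G)$ is generated by binomials, whereas the paper matches the two binomial generating sets directly.
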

\begin{proof}
Recall that, given $\LL\subseteq \ZZ^s$ a lattice, 
the lattice ideal associated with $\LL$, which we denote by 
$I_\LL$, is the ideal generated by 
$\bt^\alpha-\bt^\beta$, where $\alpha,\beta\in \NN^s$ and $\alpha-\beta\in \LL$.
Given $\alpha,\beta \in \NN^{E_G}\cong \NN^s$, let 
$\mu, \nu \in \NN^{E_G}$ and $J, K \subseteq E_G$ be such that  
$\bt^\alpha=\bt^{2\mu}\bt_J$ and $\bt^\beta=\bt^{2\nu}\bt_K$.
By Proposition~\ref{prop: on the fibers}, the Eulerian ideal is generated by the 
set of binomials $\bt^\alpha-\bt^\beta$ such that 
\begin{equation}\label{eq: homogeneous condition}
\textstyle \deg(\bt^\alpha)=\deg(\bt^\beta) \iff \sum\limits_{e\in E_G}\bigl( \alpha(e)-\beta(e)\bigr)=0
\end{equation}
and such that $J$ and $K$ are $(T,p)$-joins, for some $(T,p)\in \TG$. 
Let us show that this set coincides with the generating set of $I_\LL$. 
We have
$$
\textstyle (B\alpha)_i= \sum\limits_{j=1}^s B_{ij} \cdot \alpha(e_j)=  \sum\limits_{j : \ i\in e_j} \alpha(e_j)=\sum\limits_{j : \ i\in e_j} |\{e_j\}\cap J|+2\mu(e_j).
$$
Since, for each $i\in V_G$, the sum $\sum_{j : \ i\in e_j} | \{e_j\}\cap J|$ counts 
the number of edges in $J$ that contain $i$, which is equal to $\deg_J (i)$, we get 
$(B\alpha)_i \equiv_2 \deg_J (i)$. Hence
\begin{equation}\label{eq: incidence matrix product}
\textstyle(B(\alpha-\beta))_i=(B\alpha)_i-(B\beta)_i\equiv_2 \deg_J (i)-\deg_K (i).
\end{equation} 
Now, if $\bt^\alpha = \bt^{2\mu}\bt_J$ and $\bt^\beta=\bt^{2\nu}\bt_K$ are such that 
\eqref{eq: homogeneous condition} is satisfied and $J$ and $K$ are $(T,p)$-joins then,
by \eqref{eq: incidence matrix product}, $\bigl (B(\alpha-\beta)\bigr )_i$ is even, 
for every $i\in V_G$, and hence $\alpha -\beta \in \LL$. Conversely, if $\alpha-\beta \in \LL$
then, if $J$ and $K$ are obtained as above, by \eqref{eq: homogeneous condition} they 
have cardinalities of the same parity and moreover, using \eqref{eq: incidence matrix product}, we deduce that  
there exists $(T,p)\in \TG$ such that both $J$ and $K$ are $(T,p)$-joins.
\end{proof}

\section{A minimal generating set}\label{sec: A minimal generating set}

Consider $G$ as in Figure~\ref{fig: A bipartite graph}. 
This graph contains three Eulerian subsets of edges, two of which are squares and one 
of which is a hexagon.
Accordingly, $I(G)$ is generated by 
$\{t_e^2-t_\ell^2 : e,\ell\in E_G\}$ together with the set 
of Eulerian binomials associated with these Eulerian subsets.
However, the generating set of binomials obtained is not minimal.
Using Macaulay2, one can check that the degree $3$ Eulerian binomials 
associated with the hexagon,
$$
\renewcommand{\arraystretch}{1.3}
\begin{array}{cc}
t_{23}t_{34}t_{45}-t_{12}t_{16}t_{56}, &
t_{16}t_{34}t_{45}-t_{12}t_{23}t_{56}, \\
t_{12}t_{34}t_{45}-t_{16}t_{23}t_{56}, &
t_{16}t_{23}t_{45}-t_{12}t_{34}t_{56}, \\
t_{12}t_{23}t_{45}-t_{16}t_{34}t_{56}, &
t_{12}t_{16}t_{45}-t_{23}t_{34}t_{56}, \\
t_{16}t_{23}t_{34}-t_{12}t_{45}t_{56}, &
t_{12}t_{23}t_{34}-t_{16}t_{45}t_{56}, \\
t_{12}t_{16}t_{34}-t_{23}t_{45}t_{56}, &
t_{12}t_{16}t_{23}-t_{34}t_{45}t_{56},
\end{array}
$$ 
are not needed.

\begin{definition}[{\cite[page 367]{DiSt}}]\label{def: Markov basis}
Let $\LL\subseteq \ZZ^s$ be a lattice. A Markov basis of $\LL$ is a finite set $M\subseteq \LL$ 
such that, for every $\alpha,\beta \in \NN^s$ such that $\alpha-\beta\in \LL$, there exists a finite sequence, 
$\gamma_0,\dots,\gamma_r$, of elements in $\NN^s$, where
$\gamma_0= \alpha$, $\gamma_r= \beta$ and either $\gamma_i-\gamma_{i+1}\in M$ or
$\gamma_{i+1}-\gamma_{i}\in M$, for $i=0,\dots,r-1$.
\end{definition}

A Markov basis of $\LL$ yields a generating set of binomials of the associated lattice ideal and vice-versa, see \cite[Lemma A.1]{HM}. Definition~\ref{def: Markov basis} motivates an equivalence relation on $\mJ(T,p)$.
From now on, to help in the translation, let us fix an ordering of $E_G$, $e_1,\dots,e_s$,
and work with the identification $\NN^{E_G}\cong\NN^s$.
Suppose that we have two minimum cardinality $(T,p)$-joins, $J$ and $K$, for a fixed $(T,p)\in \TG$. 
Denote $\bt^\alpha=\bt_J, \bt^\beta=\bt_K$ and let
$M$ be a Markov basis for the lattice $\LL$ associated with $I(G)$.
Then there is a sequence of monomials, 
$\bt^{\gamma_0},\dots, \bt^{\gamma_r}$, in $\KK[E_G]$ such that
$\gamma_0=\alpha$, $\gamma_r=\beta$ and 
$\gamma_i-\gamma_{i+1}\in \LL$, for all \mbox{$i=0,\dots,r-1$.}
It follows that, for all $i$, 
\begin{equation}\label{eq: induction}
\bt^{\gamma_i}-\bt^{\gamma_{i+1}} \in I(G). 
\end{equation}
When $i=0$,  we get that
$\bt_{J}-\bt^{\gamma_{1}} \in I(G)$. By Proposition~\ref{prop: on the fibers},
since $J$ has minimum cardinality,
$\bt^{\gamma_{1}}$ is then a squarefree monomial $\bt_{L^1}$, for some minimum cardinality 
$(T,p)$-join $L^1.$
By induction on $i$, using
Proposition~\ref{prop: on the fibers} and \eqref{eq: induction}, 
we find a sequence
$L^0,L^1,\dots,L^r \in \mJ(T,p)$, where
$L^0=J$, $L^r=K$ and $\bt^{\gamma_i}=\bt_{L^i}$, for $i=0,\dots,r$.

\begin{definition}\label{def: equivalent m c (T,p)-joins}
Let $J, K\in \mJ (T,p)$ be minimum cardinality $(T,p)$-joins.
If 
there exist $L^0,L^1,\dots,L^r\in \mJ (T,p)$ such that $J=L^0$, $K=L^r$ and
$L^i \cap L^{i+1}\neq\emptyset,$  for all $0\leq i\leq r-1$, then we say that $J$ is equivalent to $K$ and write 
$J \sim K$. 
\end{definition}

Clearly, for each $(T,p)\in \TG$, 
the binary relation on $\mJ (T,p)$, defined above, is an equivalence relation.
The next result characterizes the binomials of $I(G)$ that belong to some 
minimal homogeneous generating set.

\begin{theorem}\label{thrm: necessary condition for minimal binomials}
A degree $d$ homogeneous binomial in $I(G)$ is not in the ideal 
ge\-ne\-ra\-ted by the elements of $I(G)$ of degree less than $d$ if and only if it is either
of the form $t_e^2-t_\ell^2$, for some $e,\ell\in E_G$, or of the form $\bt_J-\bt_K$, where 
$J$ and $K$ are nonequivalent minimum cardinality $(T,p)$-joins, for a given $(T,p)\in \TG.$
\end{theorem}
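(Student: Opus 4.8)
The plan is to translate the statement into a connectivity question about the fibers of the lattice $\LL$ and then settle that question combinatorially with $(T,p)$-joins. Fix $(T,p)\in\TG$ and an integer $d$ with $d\equiv p \pmod 2$, and let $\mathcal F_{(T,p),d}$ be the set of monomials $\bt^{2\rho}\bt_L$ of total degree $d$ for which $L$ is a $(T,p)$-join; by Proposition~\ref{prop: on the fibers} this is exactly the set of degree-$d$ monomials lying in one fiber of $\LL$. On this set I would consider the graph $\nabla$ in which two monomials are adjacent when their greatest common divisor is not $1$. The key reduction is the equivalence
\[
\bt^{\alpha}-\bt^{\beta}\in \langle I(G)_{<d}\rangle \iff \bt^{\alpha},\bt^{\beta}\ \text{lie in the same connected component of }\nabla,
\]
valid for any degree-$d$ homogeneous binomial $\bt^\alpha-\bt^\beta\in I(G)$. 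The implication from right to left is immediate from the lattice property: along a path $w_0,\dots,w_k$ in $\nabla$, each $g_i=\gcd(w_i,w_{i+1})\neq 1$ and $(w_i-w_{i+1})/g_i$ is again a binomial of $I(G)=I_\LL$, now of degree $<d$, so each $w_i-w_{i+1}$ lies in $\langle I(G)_{<d}\rangle$ and the conclusion follows by telescoping. The reverse implication is the general description of minimal binomial generating sets of a lattice ideal in terms of these fiber graphs, \cite[Theorem 4.12]{CTV1} (compare \cite[Lemma A.1]{HM} and Definition~\ref{def: Markov basis}).

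It then remains to describe the components of $\nabla$. Let $c=c(T,p)$ be the minimum cardinality of a $(T,p)$-join. I claim: (1) if $d=c$ then $\mathcal F_{(T,p),c}=\{\bt_L : L\in\mJ(T,p)\}$ and $\bt_L,\bt_{L'}$ are adjacent precisely when $L\cap L'\neq\emptyset$, so the components are exactly the $\sim$-classes of Definition~\ref{def: equivalent m c (T,p)-joins}; and (2) if $d>c$ then $\nabla$ is connected, with the single exception $(T,p)=(\emptyset,0)$ and $d=2$, where $\mathcal F_{(\emptyset,0),2}=\{t_e^2 : e\in E_G\}$ consists of isolated vertices. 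Part (1) is immediate, since $|L|\geq c$ for every $(T,p)$-join forces $\rho=0$ and $|L|=c$ when the total degree equals $c$. Granting both parts, the theorem follows at once: a degree-$d$ binomial fails to lie in $\langle I(G)_{<d}\rangle$ exactly when its two monomials sit in distinct components, and by the claim this occurs only for $\bt_J-\bt_K$ with $J\not\sim K$ in $\mJ(T,p)$ (a fiber with $d=c$) or for $t_e^2-t_\ell^2$ (the exceptional fiber); conversely each of these has its two monomials in distinct components. For the family $t_e^2-t_\ell^2$ one can also argue directly that $\langle I(G)_{<2}\rangle=0$, as $I(G)$ has no nonzero element of degree at most $1$.

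I expect part (2) of the claim to be the main obstacle. The strategy is to connect an arbitrary $\bt^{2\rho}\bt_L\in\mathcal F_{(T,p),d}$, by a path in $\nabla$, to a monomial of the shape $\bt_{L'}\,t_{e_0}^{\,d-c}$ with $L'\in\mJ(T,p)$ and $e_0\in E_G$ fixed; since $d>c$ all such monomials share the nontrivial factor $t_{e_0}^{\,d-c}$ and hence lie in one component. Two fiber-preserving moves are used. First, whenever $L\notin\mJ(T,p)$, Proposition~\ref{prop: char parity joins} produces an even cardinality Eulerian set $C$ with $|L\cap C|>|C|/2$; setting $2\delta=2|L\cap C|-|C|>0$, the binomial $\bt_{L\cap C}-t_g^{2\delta}\bt_{C\setminus L}$ lies in $I(G)$ (the sets $L\cap C$ and $C\setminus L$ are $(T',p')$-joins for one and the same $(T',p')$, because their symmetric difference is the Eulerian set $C$), and applying it to $\bt_L$ replaces $L$ by the strictly smaller join $L\Delta C$. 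Second, a square factor can be moved from one edge to another through a binomial $t_e^2-t_{e'}^2$ whenever a nontrivial cofactor survives, that is, whenever $d\geq 3$. The delicate part, and the source of the exceptional case, is to choose the edges $g$ and $e'$ carrying the extra squares so that consecutive monomials always retain a common variable; this is exactly what cannot be arranged when $d=2$, where no cofactor remains and the pure squares $t_e^2$ stay isolated.
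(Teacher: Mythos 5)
Your proposal is sound in substance and its combinatorial core coincides with the paper's, but the architecture is genuinely different. The paper proves the two implications directly: for ``not of special form $\Rightarrow$ lies in the lower-degree ideal'' it writes down explicit identities (moving squares with $t_e^2-t_\ell^2$, the reduction through Proposition~\ref{prop: char parity joins} when the join is not of minimum cardinality, and a telescoping sum over a $\sim$-path when $J\sim K$), and for the converse it restricts a putative representation to the fiber $\{\bt_L : L\in\mJ(T,p)\}$ and applies the evaluation homomorphism $\psi$ that is $1$ on edges of joins equivalent to $J$ and $0$ elsewhere. You instead prove one master equivalence --- membership in $\langle I(G)_{<d}\rangle$ equals gcd-connectivity in the fiber --- and then determine the components of every fiber graph $\nabla$: the $\sim$-classes in minimal degree, and a single component above minimal degree, except the fiber $\{t_e^2 : e\in E_G\}$. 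Your connectivity moves are exactly the paper's identities, so the two arguments contain the same computations; what your organization buys is a complete description of all fibers, which is essentially what Theorem~\ref{thrm: the minimal generating set of IG} needs later, whereas the paper's version has the advantage of being self-contained.

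Two caveats, one of which you must fix. The serious one: the direction of your master equivalence that you outsource to \cite[Theorem 4.12]{CTV1} is not what that theorem says. As the paper itself uses it (in the proof of Theorem~\ref{thrm: the minimal generating set of IG}), Theorem 4.12 is a spanning-tree criterion formulated in terms of the relation ``the difference lies in the ideal generated by elements of lower degree'' --- and identifying the classes of that relation with gcd-components of the fiber is precisely the nontrivial content of the theorem you are trying to prove, so the citation as it stands is close to circular. The statement you need is true and standard, but you should prove it: writing $\bt^\alpha-\bt^\beta=\sum_i c_i\,\bt^{\eta_i}(\bt^{\gamma_i}-\bt^{\delta_i})$ with each $\bt^{\gamma_i}-\bt^{\delta_i}\in I(G)$ of degree less than $d$, discard the terms whose monomials lie in fibers other than that of $\alpha$ (monomials in different fibers cannot cancel), observe that the surviving $\bt^{\eta_i}$ are nontrivial, and apply the linear functional that sums the coefficients of the monomials lying in the gcd-component of $\bt^\alpha$: each surviving term contributes $0$, while the left side would contribute $1$ if $\bt^\beta$ sat in another component. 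This is exactly the paper's $\psi$-trick in slightly greater generality, so nothing is saved by the citation. The minor caveat: in your move (a) the edge carrying $t_g^{2\delta}$ should be chosen in $L\cap C$, which is nonempty because $|L\cap C|>|C|/2$; with that choice consecutive monomials always share $t_g$, and your part (2) goes through for every $d\geq 3$, with the exceptional fiber exactly as you identified.
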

\begin{proof}
To prove the implication \enquote{$\Rightarrow$}, take a nonzero binomial 
$$f=\bt^{2\mu}\bt_J-\bt^{2\nu}\bt_K\in I(G)$$ 
of degree $d$ and consider $(T,p)\in \TG$ such that $J$ and $K$ are $(T,p)$-joins.
Let us suppose that $f$ is not of the forms given above and prove that then
$f$ is a $\KK[E_G]$-linear combination of binomials, in $I(G)$, having degree less than $d$.
Without loss of generality, we may assume that the monomials of $f$ are coprime. 
Then $J\cup K$ is a disjoint union and is an even cardinality Eulerian set.
If $d=2$, either $J\dcup K=\emptyset$ and $f=t_e^2-t_\ell^2$, for some $e,\ell\in E_G$, or $J\dcup K$ forms a square. In the latter, $\mu=0=\nu$ and both $J$ and $K$ satisfy Proposition~\ref{prop: char parity joins}, thus are 
minimum cardinality $(T,p)$-joins. Furthermore $J$ and $K$ are nonequivalent,
because otherwise we could find another minimum cardinality $(T,p)$-join, $N$, intersecting 
$J$ and yielding an even cardinality Eulerian set, $J\Delta N$, of cardinality $2$, which is impossible.
We suppose then that $d\geq 3$ and
let $|J|\leq |K|$ and $\deg(\bt^{2\mu})\geq  \deg(\bt^{2\nu})$.
If $K=\emptyset,$ $f=\bt^{2\mu}-\bt^{2\nu}$ is in the ideal generated by the differences of squares of 
variables. 
If $K\neq \emptyset$ and $\mu\neq 0$, taking $\ell \in K$ and an edge $e\in E_G$ such that 
$t_e^2$ divides $\bt^{2\mu}$, $f$ can be written as a combination of binomials, in $I(G)$, of 
degree less than $d$,
$$
\textstyle
\bt^{2\mu}\bt_J-\bt^{2\nu}\bt_K= \frac{\bt^{2\mu}}{t_e^2}\bt_J  \bigl(t_e^2-t_\ell^2\bigr) +
t_\ell \bigl (\frac{\bt^{2\mu}}{t_e^2}\bt_{J\dcup \ell}-\bt^{2\nu}\bt_{K\setminus \ell}\bigr).
$$
We are left with considering $K\neq \emptyset, \mu=0$ and $f=\bt_J-\bt_K$, yielding two last cases.

\smallskip

\noindent
If $|J|=|K|$ is not the minimum cardinality of a $(T,p)$-join.
By Proposition~\ref{prop: char parity joins}, 
there is an even cardinality Eulerian set 
$D\subseteq E_G$ such that 
\begin{equation}\label{eq: not a parity join}
\textstyle|J\cap D|> \frac{|D|}{2}. 
\end{equation}
We can then choose $A\subseteq J\cap D$ such that 
$|A|=\frac{|D|}{2}$.
It follows that
\begin{equation}\label{eq: linear combination}
f= \bt_{J\setminus A}(\bt_A-\bt_{D\setminus A})+\bt_{J\setminus A}\bt_{D\setminus A}-\bt_K. 
\end{equation}
Let us write $\bt_{J\setminus A}\bt_{D\setminus A}=\bt^{2\xi}\bt_L$, for some $\xi\in \NN^{E_G}$ and $L\subseteq E_G$.
Since $f$ and $\bt_A-\bt_{D\setminus A}$ belong to $I(G)$,
\eqref{eq: linear combination} implies that the binomial 
$\bt^{2\xi}\bt_L-\bt_K$ is in $I(G)$.
By \eqref{eq: not a parity join}, we deduce that $\xi\neq 0$ and so,
by the previous argument, $\bt^{2\xi}\bt_L-\bt_K$ is in the ideal 
generated by the binomials, in $I(G)$, with degrees less than $d$.

\smallskip

\noindent
In the last case, $J$ and $K$ are equivalent minimum cardinality $(T,p)$-joins.
So there exist $L^0,L^1,\dots,L^r\in \mJ (T,p)$ such that $J=L^0$, $K=L^r$ and
$L^i \cap L^{i+1}\neq\emptyset,$  for all $0\leq i\leq r-1$.
Hence   
$$
\textstyle f=\bt_J-\bt_K=\sum\limits_{i=0}^{r-1} \bt_{L^i \cap L^{i+1}}(\bt_{L^i \setminus L^{i+1}}-\bt_{L^{i+1}\setminus L^{i}}).
$$

\smallskip

\noindent
Let us prove the implication "$\Leftarrow$".
Take a binomial $f$ of the form $t_e^2 - t_\ell^2$, where $e, \ell \in E_G$, or of the form $\bt_J - \bt_K$, where $J$ and $K$ are nonequivalent minimum cardinality $(T, p)$-joins for a given $(T, p) \in \TG$. We must prove that $f$ is not a $\KK[E_G]$-linear combination of 
binomials, in $I(G)$, of lesser degree.
If $\deg(f) = 2$, this is clear. So, assume $\deg(f) \geq 3$ and $f = \bt_J - \bt_K$ with $J$ and $K$ as described. We proceed by contradiction, assuming that $f$ is
in the ideal generated in degree lesser than 
$\deg(f)$.
Then we can write
\begin{equation}\label{eq: f generates in lesser degree1}
\textstyle f=\sum\limits_{i=1}^{r} c_i(\bt^{\alpha_i}-\bt^{\beta_{i}}), 
\end{equation}
where, for all $i=1,\dots,r$, $\bt^{\alpha_i}-\bt^{\beta_{i}}\in I(G)$,
$\gcd(\bt^{\alpha_i},\bt^{\beta_{i}})\neq 1$ and $c_i \in \KK$.
The monomials $\bt_J$ and $\bt_K$ appear in \eqref{eq: f generates in lesser degree1}, respectively, as 
linear combinations $\sum_i c_i\bt_{J}$ and $\sum_i c_i\bt_{K}$.
This means that the remaining monomials on the right side of \eqref{eq: f generates in lesser degree1} cancel out. 
In particular, all the monomials that are not of the form $\bt_A$, for some $A\in \mJ(T,p)$, cancel out and we may exclude them from \eqref{eq: f generates in lesser degree1}.  
However, by Proposition~\ref{prop: on the fibers}, the monomial 
$\bt^{\alpha_i}$ is not of this form if and only if $\bt^{\beta_i}$ is neither.
So, after excluding these monomials, we still obtain $f$ as
a linear combination of binomials.
Therefore, 
without loss of generality, we assume that, for all $i=1,\dots,r,$ 
there are $J^i,K^i\in \mJ(T,p)$ such that 
$\bt^{\alpha_i}=\bt_{J^i}$ and $\bt^{\beta_{i}}=\bt_{K^i}$.
Then \eqref{eq: f generates in lesser degree1} becomes 
\begin{equation}\label{eq: f generates in lesser degree2}
\textstyle f=\sum\limits_{i=1}^{r} c_i(\bt_{J^i}-\bt_{K^i}).
\end{equation}
Finally, consider the ring homomorphism 
$\psi\colon \KK[E_G]\to \KK$ that maps 
$t_e \mapsto 1$, for every $e$ belonging to some $N\in \mJ(T,p)$ equivalent to $J$, 
and $t_e \mapsto 0$, for every other edge $e\in E_G$.
Since, for $i=0,\dots,r$, $J^i \sim K^i$,
either $J^i \sim K^i \sim J$ or $J^i \nsim J$ and $K^i \nsim J$.
In the first case,
$\psi(\bt_{J^i}-\bt_{K^i})=1-1=0$.
In the second, 
$\psi(\bt_{J^i}-\bt_{K^i})=0-0=0$.
This means that the right side of \eqref{eq: f generates in lesser degree2}
is sent to zero by $\psi$.
On the other hand, since we are assuming that 
$J\nsim K$, 
$$\psi(f)=\psi(\bt_J-\bt_K)=1-0=1,$$ 
which is a contradiction.
\end{proof}

\begin{example}
Let $G$ be the graph of Figure~\ref{fig: A bipartite graph} and let us show that 
the Eulerian binomials associated with the hexagon are not needed in a generating 
set. Consider $\bt_J-\bt_K$ one such Eulerian binomial.
When $J$ is contained in one of the squares, we see 
that $(T,p)=(\{2,5\},1)$ and a minimum 
cardinality $(T,p)$-join is thus the edge $\{2,5\}$.
Hence $J$ is not a minimum cardinality 
$(\{2,5\},1)$-join and, by Theorem~\ref{thrm: necessary condition for minimal binomials}, $\bt_J-\bt_K$ is a linear combination of binomials of lesser degree.
The example is:
$$
\renewcommand{\arraystretch}{1.3}
\begin{array}{l}
t_{12}t_{16}t_{56}-t_{23}t_{34}t_{45}
\\
= t_{12}(t_{16}t_{56}-t_{12}t_{25})+t_{25}(t_{12}^2-t_{23}^2)+
t_{23}(t_{23}t_{25}-t_{34}t_{45}).
\end{array}
$$
We may therefore assume that $J$ intersects a square of $G$, say $Q\subseteq E_G$,
in exactly $2$ edges.
Then, as both squares of $G$ contain $3$ edges of the hexagon, a third edge of $Q$ 
belongs to $K$ (and not to $J$).
Let us show that $J\sim K$, which 
by Theorem~\ref{thrm: necessary condition for minimal binomials}, proves our claim.
Consider 
$J\Delta Q\subseteq E_G$. Then $$|J\Delta Q|=|J|+|Q|-2|J\cap Q|=3+4-4=3$$ and thus 
$J\Delta Q$ is another minimum cardinality $(T,p)$-join.
Clearly $J\cap (J\Delta Q) \neq \emptyset$ and $K\cap (J\Delta Q) \neq \emptyset$, 
so $J\sim K$.
An example of this is 
$$
\renewcommand{\arraystretch}{1.3}
\begin{array}{l}
t_{12}t_{34}t_{56}-t_{23}t_{45}t_{16}
\\
= t_{34}(t_{12}t_{56}-t_{16}t_{25})+
t_{16}(t_{34}t_{25}-t_{23}t_{45}).
\end{array}
$$
\end{example}

We now give an explicit description of a minimal binomial generating set of $I(G)$.
In the remainder of this article, let us denote by $\TG^*$ the set of all $(T,p)\in \TG$ for which there are at least two 
nonequivalent minimum cardinality $(T,p)$-joins.

\begin{theorem}\label{thrm: the minimal generating set of IG}
Let $G$ be a graph. Choose $\ell\in E_G$. For each $(T,p)\in \TG^*$, choose $\CS(T,p)\subseteq \mJ (T,p)$
a set of representatives
of the equivalence  classes of $\sim$ in $\mJ (T,p)$ and choose $K(T,p)\in \CS(T,p)$.
The set 
\begin{equation}\label{eq: minimal generating set}
\textstyle \{t_e^2-t_\ell^2 : e\in E_G \setminus\ell\} \cup \bigcup\limits_{(T,p)\in \TG^*} \{\bt_J-\bt_{K(T,p)} : J\in \CS(T,p)\setminus K(T,p) \}
\end{equation}
is a minimal homogeneous generating set of $I(G)$. 
\end{theorem}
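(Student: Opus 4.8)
The plan is to verify that the set in \eqref{eq: minimal generating set} is both a generating set and minimal, leaning almost entirely on the two preceding results: Theorem~\ref{thrm: necessary condition for minimal binomials}, which tells us exactly which binomials can appear in a minimal generating set, and the general theory of lattice ideals via Markov bases. The key structural fact to exploit is that $I(G)$ is a lattice ideal (Proposition~\ref{prop: lattice description}), so its minimal binomial generating sets correspond to minimal Markov bases, and any two such have the same cardinality in each graded piece.

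\emph{Generation.} First I would argue that the proposed set generates $I(G)$. By Theorem~\ref{thrm: necessary condition for minimal binomials}, every binomial that must appear in \emph{some} minimal generating set is of one of two types: a difference of squares $t_e^2-t_\ell^2$, or a binomial $\bt_J-\bt_K$ with $J,K$ nonequivalent minimum cardinality $(T,p)$-joins. For the first type, the chosen subset $\{t_e^2-t_\ell^2 : e\in E_G\setminus\ell\}$ generates every difference of squares, since
$$
t_e^2-t_{e'}^2=(t_e^2-t_\ell^2)-(t_{e'}^2-t_\ell^2).
$$
For the second type, I must show that fixing one representative $K(T,p)$ per pair $(T,p)$ and taking only the binomials $\bt_J-\bt_{K(T,p)}$, as $J$ ranges over the \emph{other} class representatives, suffices to produce $\bt_{J'}-\bt_{K'}$ for any two nonequivalent minimum cardinality joins $J',K'$. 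This is the telescoping identity
$$
\bt_{J'}-\bt_{K'}=(\bt_{J'}-\bt_{K(T,p)})-(\bt_{K'}-\bt_{K(T,p)}),
$$
combined with the observation that $\bt_{J'}-\bt_{K(T,p)}$ is congruent modulo lower-degree binomials to $\bt_J-\bt_{K(T,p)}$ for the representative $J$ of the class of $J'$ (since $J'\sim J$ contributes, via Definition~\ref{def: equivalent m c (T,p)-joins}, a sum of lower-degree binomials coming from nonempty intersections, exactly as in the last case of the proof of Theorem~\ref{thrm: necessary condition for minimal binomials}). Together with the fact that the full Gröbner basis generates $I(G)$ and every generator reduces, modulo lower degree, into the listed forms, this gives generation.

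\emph{Minimality.} For minimality I would invoke the converse direction (\enquote{$\Leftarrow$}) of Theorem~\ref{thrm: necessary condition for minimal binomials}: each element of the proposed set is a binomial that is \emph{not} in the ideal generated by the binomials of strictly smaller degree. Hence no listed generator of degree $d$ can be expressed using lower-degree elements. It remains to rule out redundancy \emph{within a fixed degree}. Here I would use that the images of the chosen degree-$d$ generators are $\KK$-linearly independent in the graded vector space $I(G)_d/(I(G)_{<d}\cdot\KK[E_G])_d$: distinct pairs $(T,p)$ and distinct equivalence classes produce binomials whose leading monomials $\bt_J$ lie in disjoint fibers of Proposition~\ref{prop: on the fibers}, and the difference-of-squares generators live in the separate fiber of $(\emptyset,0)$-joins of cardinality two. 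The separating homomorphism $\psi$ constructed in the proof of Theorem~\ref{thrm: necessary condition for minimal binomials} can be adapted to detect each individual class, confirming that no listed binomial is a combination of the others modulo lower degrees.

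\emph{The main obstacle} I expect is the minimality-within-a-degree argument: establishing that the chosen representatives, one per equivalence class, give a \emph{linearly independent} — not merely spanning — set in each graded quotient. The generation side is essentially a bookkeeping consequence of Theorem~\ref{thrm: necessary condition for minimal binomials} and telescoping, but to make minimality airtight I would need to either appeal to the general uniqueness-of-minimal-Markov-basis-cardinality result for lattice ideals (so that any minimal generating set has exactly one binomial per unordered pair of classes, up to the spanning-tree choice of representatives) or carefully generalize the $\psi$-argument so that it simultaneously separates all classes across all pairs $(T,p)$ in a single degree. I would lean on the lattice-ideal framework of \cite{CTV1} here, since it guarantees that minimal generating sets of binomials are in bijection with spanning forests of the \enquote{fiber graphs,} and the equivalence classes of $\sim$ are precisely the connected components of these graphs — making the count of chosen generators forced and hence minimal.
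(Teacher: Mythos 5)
Your proposal is correct in substance and, at the decisive point, rests on the same key input as the paper: the characterization of minimal binomial generating sets of lattice ideals in \cite[Theorem 4.12]{CTV1}, made applicable by two identifications that both you and the paper use --- the fibers of the relevant monomials are exactly $\{t_e^2 : e\in E_G\}$ and $\{\bt_L : L\in \mJ(T,p)\}$ (this is where Proposition~\ref{prop: on the fibers} enters, ruling out monomials with square factors in the fiber of $\bt_J$ when $J$ has minimum cardinality), and the congruence $\equiv$ modulo lower-degree elements, restricted to such a fiber, coincides with $\sim$ (this is Theorem~\ref{thrm: necessary condition for minimal binomials}). Where you differ is in organization: you split the claim into generation and minimality, proving generation by hand (telescoping through a chain of class representatives, plus an implicit induction on degree that uses the forward implication of Theorem~\ref{thrm: necessary condition for minimal binomials} to reduce an arbitrary binomial to the two special forms), and then deferring minimality either to a graded-Nakayama linear-independence argument, which you sketch but do not complete, or to the spanning-forest picture of \cite{CTV1}. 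The paper does neither half separately: it verifies once that each fiber graph $\GG(F_\alpha)$ is a spanning tree --- the chosen binomials form a star on the $\sim$-classes, centered at $[\bt_{K(T,p)}]$, respectively at $[t_\ell^2]$ --- and \cite[Theorem 4.12 and Remark 4.13]{CTV1} then delivers generation and minimality simultaneously, which makes your direct generation argument redundant. If you keep your structure, two points should be made explicit: the application of \cite{CTV1} requires the lattice of Proposition~\ref{prop: lattice description} to be positive, and your within-degree independence claim must account for several distinct fibers occurring in the same degree (disjointness of fibers, via Proposition~\ref{prop: on the fibers}, is what prevents cross-fiber cancellation); both are easy, but they are exactly the hinge on which the fiber-graph argument turns.
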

\begin{proof}
Let $S$ denote the set \eqref{eq: minimal generating set}.
We will show that $S$ satisfies the conditions of \cite[Theorem~4.12]{CTV1}, 
which characterizes every minimal binomial generating set of a lattice ideal. 
Given $\alpha\in \NN^s$, consider 
$$
F_\alpha=\{\bt^\beta : \bt^\alpha-\bt^\beta\in I(G)\},
$$
the fiber of $\bt^\alpha$ associated with $I(G)$.
We define an equivalence relation on this set as follows.
Given $\bt^\mu,\bt^\nu\in F_\alpha$, set $\bt^\mu\equiv\bt^\nu$ if and only if 
$\bt^\mu-\bt^\nu$ belongs to the ideal generated by the elements of $I(G)$
of degree less than $\deg(\bt^\alpha)$. Note that, since $I(G)$ is homogeneous, 
$$\deg(\bt^\alpha)=\deg(\bt^\mu)=\deg(\bt^\nu).$$
Define $\GG(F_\alpha)$ to be the graph the vertex set of which is the set of equivalence classes of $\equiv$
on $F_\alpha$ and for which there is an edge between $[\bt^\mu]$ and  $[\bt^\nu]$ if and only if 
$\bt^\mu \not\equiv \bt^\nu$ and there is a binomial $\bt^\delta-\bt^\gamma\in S$ such that 
$\bt^\delta\in [\bt^\mu]$ and $\bt^\gamma\in [\bt^\nu]$.
Since $I(G)$ is a lattice ideal with a positive associated lattice, 
according to \cite[Theorem~4.12 and Remark~4.13]{CTV1}, it suffices to show that when
$\GG(F_\alpha)$ is not a single vertex it is a spanning tree.
\smallskip

\noindent
Fix $\alpha\in \NN^s$, and take $\bt^\beta,\bt^\gamma\in F_\alpha$ such that $\bt^\beta\not \equiv\bt^\gamma$. Then $\bt^\beta-\bt^\gamma$ is a homogeneous binomial in $I(G)$ and, denoting by $d$ its degree, it does not belong to the ideal generated by the elements of $I(G)$ of degree less than $d$.
By Proposition~\ref{thrm: necessary condition for minimal binomials}, there are two cases to consider. In the first case, $\bt^\beta-\bt^\gamma=t_e^2-t_\ell^2$, for some $e,\ell\in E_G$.
Now, if $\bt^{2\xi}\bt_J\in F_\alpha$, then $\bt^{2\xi}\bt_J-t_e^2\in I(G)$ and thus by 
Proposition~\ref{prop: on the fibers} 
the monomial $\bt^{2\xi}\bt_J$ has degree $2$ and $J$ is an $(\emptyset,0)$-join.
We deduce that $J=\emptyset$ and $\bt^{2\xi}\bt_J$ is the square of a variable.
We conclude that
$F_\alpha=\{t_e^2 : e\in E_G\}$. Since, using Proposition~\ref{thrm: necessary condition for minimal binomials}, $t_e^2 \not \equiv t_\ell^2$, for every $e\neq\ell \in E_G$, the vertex set of
$\GG(F_\alpha)$ can be canonically identified with $F_\alpha$ and the edges of 
$\GG(F_\alpha)$ are of the form $\{t_e^2,t_\ell^2\}$, with $e\in E_G\setminus \ell$.
This shows that $\GG(F_\alpha)$ is a spanning tree.

\smallskip

In the second case, $\bt^\beta-\bt^\gamma=\bt_J-\bt_K$, where $J$ and $K$ are two nonequivalent 
minimum cardinality $(T,p)$-joins. Now, if $\bt^{2\xi}\bt_L$ is a monomial in $F_\alpha$, then
$\bt^{2\xi}\bt_L-\bt_J$ is in $I(G)$ and, by Proposition~\ref{prop: on the fibers}, $L$ is a $(T,p)$-join. This implies that $\xi=0$ and hence $L$ is a minimum cardinality $(T,p)$-join.
Therefore $F_\alpha=\{\bt_L : L\in \mJ (T,p)\}$. By Proposition~\ref{thrm: necessary condition for minimal binomials}, given $\bt_L,\bt_M \in F_{\alpha}$ we have 
$$\bt_L\equiv\bt_M \iff L\sim M.$$
We conclude that the vertex set of $\GG(F_\alpha)$ is 
$
\{[\bt_L] : L\in \CS (T,p)\}
$ and the edge set of $\GG(F_\alpha)$ consists of the edges 
$\{[\bt_L],[\bt_{K(T,p)}]\}$, with $L\in \CS (T,p)\setminus K(T,p)$, which again shows that 
$\GG(F_\alpha)$ is a spanning tree.
\end{proof}

\begin{corollary}\label{cor: characterization of minimal generating degrees}
The degrees of the minimal homogeneous generators of $I(G)$ are $2$ and the  cardinalities of minimum cardinality 
$(T,p)$-joins for $(T,p)\in \TG^*$.
\end{corollary}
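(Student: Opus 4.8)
The plan is to read off the degrees directly from the explicit minimal homogeneous generating set produced in Theorem~\ref{thrm: the minimal generating set of IG}. The only general fact I need beyond that theorem is that, for a homogeneous ideal of a polynomial ring over a field, the number of minimal homogeneous generators in each degree is an invariant of the ideal: by graded Nakayama, it equals $\dim_\KK \bigl(I(G)/\mathfrak{m}I(G)\bigr)_d$, where $\mathfrak{m}$ is the irrelevant maximal ideal of $\KK[E_G]$. Consequently, the set of degrees occurring in \emph{any} minimal homogeneous generating set of $I(G)$ coincides with the set of degrees occurring in the particular one exhibited in Theorem~\ref{thrm: the minimal generating set of IG}, so it suffices to compute the latter.

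Next I would inspect the two families in \eqref{eq: minimal generating set}. The binomials $t_e^2-t_\ell^2$, with $e\in E_G\setminus\ell$, all have degree $2$, and this family is nonempty because $|E_G|\geq 2$; hence $2$ is always a minimal generating degree. For a fixed $(T,p)\in\TG^*$, every binomial $\bt_J-\bt_{K(T,p)}$ with $J\in\CS(T,p)\setminus K(T,p)$ has degree $|J|=|K(T,p)|$; since $J$ and $K(T,p)$ both lie in $\mJ(T,p)$, this common value is precisely the minimum cardinality of a $(T,p)$-join. Thus the degrees occurring in the second family are exactly the minimum cardinalities of $(T,p)$-joins, as $(T,p)$ ranges over $\TG^*$.

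To close the argument I would check that each of these candidate degrees is genuinely realized, rather than merely being an upper bound. The degree $2$ is realized by the first family. For each $(T,p)\in\TG^*$, the defining property of $\TG^*$ is that $\mJ(T,p)$ contains at least two nonequivalent minimum cardinality joins, so $\CS(T,p)$ has at least two elements and $\CS(T,p)\setminus K(T,p)\neq\emptyset$; therefore at least one generator of degree equal to the minimum cardinality of a $(T,p)$-join does occur. Combining the two families, the set of minimal generating degrees is exactly $\{2\}$ together with the minimum cardinalities of $(T,p)$-joins for $(T,p)\in\TG^*$, which is the assertion. The argument is essentially a direct reading of Theorem~\ref{thrm: the minimal generating set of IG}, so there is no genuine obstacle; the only two points requiring a word of care are the invariance of the generating degrees (so that the computation for one minimal generating set determines the degrees for all of them) and the nonemptiness of the second family for each $(T,p)\in\TG^*$ (so that the claimed degrees are attained and not merely permitted).
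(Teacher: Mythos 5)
Your proposal is correct and follows the same route as the paper, whose entire proof is simply ``Use Theorem~\ref{thrm: the minimal generating set of IG}.'' You have merely made explicit the details the paper leaves implicit: the invariance of minimal generating degrees via graded Nakayama, and the nonemptiness of both families in \eqref{eq: minimal generating set} (the first because $|E_G|\geq 2$, the second by the definition of $\TG^*$), both of which are accurate and worth noting.
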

\begin{proof}
Use Theorem~\ref{thrm: the minimal generating set of IG}.
\end{proof}

\section{Maximal generating degree}\label{sec: Generating degrees} 
Let us denote the maximal generating degree of $I(G)$ by 
$\dd(I(G))$. Before we can address the proofs of the main results of 
this section, we need to show two preliminary results.

\begin{lemma}\label{lem: lemma on (T,p)}
For $(T,p)\in \TG^*$, take $J$ and $K$ to be nonequivalent minimum cardinality $(T,p)$-joins. 
Then $J\dcup K$ is either an even cycle 
or the edge-disjoint union of two odd cycles, with at most one vertex in common.
\end{lemma}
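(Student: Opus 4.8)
The plan is to reduce the entire statement to a single structural fact about $C:=J\dcup K$ and then read off the two possibilities by elementary graph theory. First I would record that nonequivalent joins must be disjoint: if $J\cap K\neq\emptyset$, then the length-one chain $J,K$ already witnesses $J\sim K$. Hence $C=J\cup K=J\Delta K$, and since $J,K$ are both $(T,p)$-joins, $C$ is an $(\emptyset,0)$-join, i.e.\ an even cardinality Eulerian set. Moreover $|J|=|K|$, as both are of minimum cardinality.

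The heart of the argument is the following claim: \emph{the only even cardinality Eulerian subsets of $C$ are $\emptyset$ and $C$}. To prove it I would suppose $\emptyset\neq D\subsetneq C$ is even Eulerian. Applying Proposition~\ref{prop: char parity joins} to $J$ (and to $K$) with the Eulerian set $D$ — using that $J\Delta D$ and $K\Delta D$ are again $(T,p)$-joins, hence no shorter than $J,K$ — yields $|J\cap D|\le |D|/2$ and $|K\cap D|\le |D|/2$; since $J\cap D$ and $K\cap D$ partition $D$, both equal $|D|/2$. Now set $J':=J\Delta D$. It is a $(T,p)$-join of cardinality $|J|$, so $J'\in\mJ(T,p)$, and one computes $J\cap J'=J\setminus D$ and $K\cap J'=K\cap D$. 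The latter is nonempty (it has $|D|/2>0$ edges), so $J'\sim K$; if in addition $J\setminus D\neq\emptyset$ then $J\sim J'$, whence $J\sim K$, a contradiction. Therefore $J\subseteq D$, which forces $|J|=|D|/2=|K|$ and then $K\subseteq D$ as well, so $C=J\cup K\subseteq D$, contradicting $D\subsetneq C$. This establishes the claim, and it is the step I expect to carry all the weight.

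With the claim in hand the structure follows. Since $C\neq\emptyset$ is Eulerian it contains a cycle $Z$. If some such $Z$ has even length, it is a nonempty even cardinality Eulerian subset, so the claim gives $Z=C$ and $C$ is an even cycle. Otherwise every cycle of $C$ is odd. Pick one, $Z$; then $|C\setminus Z|=|C|-|Z|$ is odd and positive, so the Eulerian set $C\setminus Z$ is nonempty and contains a cycle $Z'$, necessarily odd. Now $Z\dcup Z'$ is a nonempty even cardinality Eulerian subset, so the claim forces $C=Z\dcup Z'$, an edge-disjoint union of two odd cycles.

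It remains to bound the number of shared vertices, and this is the most delicate point. I would argue by contradiction: if $Z$ and $Z'$ shared two distinct vertices $u,v$, I choose them consecutive along $Z$, so that the arc $A\subseteq Z$ joining $u$ to $v$ has no interior vertex lying on $Z'$. Letting $A',B'$ be the two arcs of $Z'$ between $u$ and $v$, the interiors of $A$ and of $A',B'$ are disjoint, so $A\cup A'$ and $A\cup B'$ are genuine simple cycles. Since $|A'|+|B'|=|Z'|$ is odd, $|A'|$ and $|B'|$ differ in parity, so one of $A\cup A'$, $A\cup B'$ has even length. This is a proper (it omits $Z\setminus A\neq\emptyset$), nonempty, even cardinality Eulerian subset of $C$, contradicting the claim. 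Hence $Z$ and $Z'$ meet in at most one vertex, which completes the proof. The only subtlety here is the choice of consecutive shared vertices, which is precisely what guarantees that the extracted closed walks are simple cycles rather than merely even Eulerian sets.
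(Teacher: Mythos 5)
Your proof is correct and follows essentially the same route as the paper's: the key step --- using Proposition~\ref{prop: char parity joins} to get $|J\cap D|=|K\cap D|=\frac{|D|}{2}$ for a nonempty even Eulerian subset $D\subseteq J\dcup K$, and then the chain $J,\,J\Delta D,\,K$ to force $D=J\dcup K$ --- is exactly the paper's exchange argument, merely packaged as a single claim about all even Eulerian subsets rather than split into the even-cycle and two-odd-cycles cases. If anything, your write-up is more careful than the paper's, since you explicitly verify the nonempty-intersection requirements of the equivalence chain (handling the case $J\subseteq D$) and choose consecutive shared vertices so that the extracted arcs genuinely form simple cycles, two details the paper leaves implicit.
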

\begin{proof}
Since $J\dcup K$ is a nonempty Eulerian set, by \cite[Theorem 1]{BoMGT}, it is the edge-disjoint union of 
cycles. In particular, it contains a cycle.
Let us consider two cases. 
In the first case, suppose that $J\dcup K$ contains an even cycle.
Denote it by $D$ and let us show that $J\dcup K=D$. 
Since $|D|=|J\cap D|+|K\cap D|$, by Proposition~\ref{prop: char parity joins}, 
we deduce that
$|J|=|J \Delta D|$.
Hence
$J \Delta D$ is a minimum cardinality $(T,p)$-join, different from $J$.
If $J \Delta D\neq K$, 
the sequence $J,J \Delta D, K$ implies that $J$ and $K$ are equivalent. This is a contradiction. Thus
$J \Delta D= K$ and $J\dcup K=D$.
For the second case, let us assume that $J\dcup K$ does not contain any even cycles. 
Then, as $|J\dcup K|$ is even, it contains at least two edge-disjoint odd cycles, 
$C_1,C_2 \subseteq E_G$.
Arguing as in the previous case, we obtain 
that $J\dcup K=C_1 \dcup C_2$.
To see that $C_1$ and $C_2$
have at most one vertex in common, assume otherwise, that they intersect in two distinct 
vertices.
These vertices determine, in each $C_i$, two paths with lengths of different parity. 
Out of these four paths, any two of the same parity form an even cycle contained 
in $J\dcup K$. 
This is a contradiction, so $C_1$ and $C_2$ have at most one vertex in common.
\end{proof}

\begin{definition}\label{def: even-chord}
Let $C\subseteq E_G$ be a nonempty even cardinality Eulerian set.
An edge  
\mbox{$\ell\in E_G\setminus C$} is called an even-chord of $C$ if  
there are even cardinality Eulerian sets $C_1,C_2$ with $C_1\cap C_2=\{\ell\}$ and 
$C=C_1 \Delta C_2$.
\end{definition}

\begin{rmks}\label{rmks: remark on even-chords}
$(i)$ A chord of a cycle $C\subseteq E_G$ is
an edge, of $G$, between two nonadjacent vertices of $C$.
An even-chord of an even cycle is a chord that separates it 
into two even cycles. 
If $G$ is a bipartite graph, any chord of a cycle separates it
into two even cycles and is thus an even-chord.
$(ii)$ An even cardinality Eulerian set with an 
even-chord need not be an even cycle.
Take the graph of Figure~\ref{fig: even cardinality Eulerian sets with even-chords}.
\begin{figure}[ht]
\begin{center}
\begin{tikzpicture}

\coordinate (P1) at (-1,0);
\coordinate (P2) at (0.8-1,0.8);
\coordinate (P3) at (0.8-1,-0.8);
\coordinate (P4) at (-0.8-1,-0.8);
\coordinate (P5) at (-0.8-1,0.8);

\coordinate (P6) at (2+1,0);
\coordinate (P7) at (2+0.30901699,0.95105651);
\coordinate (P8) at (2+-0.80901699,0.58778525);
\coordinate (P9) at (2+-0.80901699,-0.58778525);
\coordinate (P10) at (2+0.30901699,-0.95105651);

\draw (P1)--(P2);
\draw (P1)--(P3);
\draw (P1)--(P4);
\draw (P1)--(P5);
\draw (P2)--(P3);
\draw (P4)--(P5);
\draw (P2)--(P5);

\draw (P6)--(P7);
\draw (P7)--(P8);
\draw (P8)--(P9);
\draw (P9)--(P10);
\draw (P6)--(P10);
\draw (P6)--(P9);

\foreach \i in {1,...,10} {\fill[color=black] (P\i) circle (2pt);}

\foreach \i in {1} {\draw  (P\i) node[anchor=south] {$\scriptstyle \i$};}
\foreach \i in {2} {\draw  (P\i) node[anchor=west] {$\scriptstyle \i$};}
\foreach \i in {3} {\draw  (P\i) node[anchor=north] {$\scriptstyle \i$};}
\foreach \i in {4} {\draw  (P\i) node[anchor=north] {$\scriptstyle \i$};}
\foreach \i in {5} {\draw  (P\i) node[anchor=east] {$\scriptstyle \i$};}

\foreach \i in {6} {\draw  (P\i) node[anchor=south] {$\scriptstyle \i$};}
\foreach \i in {7} {\draw  (P\i) node[anchor=west] {$\scriptstyle \i$};}
\foreach \i in {8} {\draw  (P\i) node[anchor=south] {$\scriptstyle \i$};}
\foreach \i in {9} {\draw  (P\i) node[anchor=north] {$\scriptstyle \i$};}
\foreach \i in {10} {\draw  (P\i) node[anchor=west] {$\scriptstyle \i$};}

\end{tikzpicture}
\end{center}
\caption{Even cardinality Eulerian sets with even-chords.} 
\label{fig: even cardinality Eulerian sets with even-chords}
\end{figure}
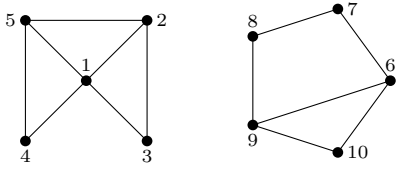 
For this graph, an example of an even cardinality Eulerian subset
with an even-chord is the 
edge-disjoint union of the pentagon on the right with one of the 
triangles on the left, say the subset
$$\{\{6,7\},\{7,8\},\{8,9\},\{9,10\},\{6,10\},\{1,2\},\{1,3\},\{2,3\}\}.$$
The even-chord is the chord of the pentagon, the edge $\{6,9\}$, with $C_1$ 
the square
$\{\{6,7\},\{7,8\},\{8,9\},\{6,9\}\}$ and $C_2$
the vertex-disjoint union of two 
triangles, $$\textstyle\{\{1,2\},\{1,3\},\{2,3\},\{6,9\},\{6,10\},\{9,10\}\}.$$
Another type of example arises as the 
union of two triangles with a vertex in common, say 
$$\{\{1,2\},\{1,3\},\{2,3\},\{1,4\},\{1,5\},\{4,5\}\}.$$
It has the edge $\{2,5\}$ as an even-chord and,
in this case, $C_1$ and $C_2$ are 
the squares 
$\{\{1,3\},\{2,3\},\{2,5\},\{1,5\}\}$
and 
$\{\{1,4\},\{1,2\},\{2,5\},\{4,5\}\}$.
$(iii)$
If $C$ is the even cardinality Eulerian subset given by the vertex-disjoint union of two odd cycles then $C$ has an even-chord if and only if 
one of the odd cycles has a chord. 
To see this, notice that, if $C_1$, $C_2$ and 
$\ell\in E_G\setminus C$ satisfy the conditions of Definition~\ref{def: even-chord} then $C\cup \{\ell\}$ contains $C_1$ and $C_2$. However, if $\ell$ is not 
a chord of either of the odd cycles of $C$, then $\{\ell\}$ does not belong to
any cycle contained in $C\cup \{\ell\}$.
In particular, $C\cup \{\ell\}$ does not contain 
even cardinality Eulerian subsets, other than $C$ itself, contradicting that 
it contains $C_1$ and $C_2$.
Thus $\ell$ must be a chord of one of the odd cycles of $C$. 
Conversely, if $\ell$ is a chord of one of the odd cycles of $C$, it 
separates that cycle into an even cycle and an odd cycle, that intersect in 
$\{\ell\}$.
Let $C_1\subseteq C\cup \{\ell\}$ denote this even cycle and 
let $C_2=C\Delta C_1$, which is the vertex-disjoint union of 
two odd cycles. Then $C_1 \cap C_2=\{\ell\}$ and 
$\ell$ is an even-chord of $C$.
\end{rmks}

\begin{lemma}\label{lem: even-chord}
Let $J$ and $K$ be disjoint minimum cardinality $(T,p)$-joins, for some 
$(T,p)\in \TG$. If $C=J\dcup K$ has an even-chord, then 
$J$ and $K$ are equivalent. 

\end{lemma}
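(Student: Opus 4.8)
The plan is to connect $J$ and $K$ by a single intermediate minimum cardinality $(T,p)$-join built from the even-chord. First I would record the set-up: since $J$ and $K$ are disjoint $(T,p)$-joins, $C=J\dcup K$ is a nonempty even cardinality Eulerian set, and by Definition~\ref{def: even-chord} there are even cardinality Eulerian sets $C_1,C_2$ with $C_1\cap C_2=\{\ell\}$ and $C=C_1\Delta C_2$, where $\ell\in E_G\setminus C$. Because $\ell\notin C$, the sets $C_1\setminus\{\ell\}$ and $C_2\setminus\{\ell\}$ partition $C$. I would then split $J$ and $K$ along this partition, writing $J_i=J\cap C_i$ and $K_i=K\cap C_i$ for $i=1,2$, so that $J=J_1\dcup J_2$, $K=K_1\dcup K_2$, and $C_i\setminus\{\ell\}=J_i\dcup K_i$; in particular $|C_i|=|J_i|+|K_i|+1$.

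The candidate intermediate join is $M=J\Delta C_1$, which, since $J\Delta K=C=C_1\Delta C_2$, also equals $K\Delta C_2$, so it is symmetric in the two joins. A direct computation gives $M=J_2\dcup K_1\dcup\{\ell\}$. To see that $M$ is again a minimum cardinality $(T,p)$-join I would compare cardinalities. As $J$ is minimal and $J\Delta C_i$ is another $(T,p)$-join, minimality (Proposition~\ref{prop: char parity joins}) gives $|J\cap C_i|\le |C_i|/2$, i.e.\ $|J_i|\le |K_i|+1$, and the same applied to $K$ yields $|K_i|\le |J_i|+1$; since $|C_i|$ is even, $|J_i|+|K_i|$ is odd, which forces $\bigl||J_i|-|K_i|\bigr|=1$. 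From $|J|=|K|$ the two signed differences $|J_i|-|K_i|$ sum to $0$, hence are opposite, and up to swapping the labels $C_1$ and $C_2$ I may assume $|J_1|=|K_1|+1$ and $|K_2|=|J_2|+1$. Then $|M|=|J_2|+|K_1|+1=|J_2|+|J_1|=|J|$, so $M\in\mJ(T,p)$.

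The hard part will be \emph{connectivity}: the chain $J,M,K$ witnesses $J\sim K$ only if $M\cap J=J_2$ and $M\cap K=K_1$ are both nonempty, and a priori one of them could vanish. This is exactly where the even-chord hypothesis does its work. Since $C_1$ and $C_2$ each contain $\ell$, they are nonempty even cardinality Eulerian sets, and a nonempty Eulerian set in a simple graph is an edge-disjoint union of cycles, so it has at least three edges; being of even cardinality, $|C_1|,|C_2|\ge 4$. Combining this with the cardinality relations above, $|C_2|=2|J_2|+2\ge 4$ forces $J_2\neq\emptyset$, and $|C_1|=2|K_1|+2\ge 4$ forces $K_1\neq\emptyset$. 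Hence $M$ meets both $J$ and $K$, and the sequence $J,M,K$ of minimum cardinality $(T,p)$-joins with nonempty consecutive intersections shows $J\sim K$, as required.
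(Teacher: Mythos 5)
Your proof is correct and takes essentially the same route as the paper's: both use the single intermediate join $M=J\Delta C_1$, establish that it lies in $\mJ(T,p)$ via Proposition~\ref{prop: char parity joins}, and obtain the two nonempty intersections $M\cap J$ and $M\cap K$ from the fact that nonempty even-cardinality Eulerian sets have at least four edges. The only difference is bookkeeping: the paper pins down $|J\cap C_1|=\tfrac{|C_1|}{2}$ by a pigeonhole dichotomy using the minimality of $J$ alone, whereas you use the minimality of both $J$ and $K$ together with a parity argument to determine all four intersection cardinalities exactly.
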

\begin{proof}
Let $C_1,C_2 \subseteq E_G$ be even cardinality Eulerian sets such that  
$C=C_1\Delta C_2$ and $|C_1 \cap C_2|=1$.
If $|J \cap C_i|\leq\frac{|C_i|}{2}-1$,
for both $i=1,2$, we would get that 
$$
\textstyle
\frac{|C_1|}{2}+\frac{|C_2|}{2}-1=\frac{|C|}{2}=|J|=|J \cap C_1|+|J \cap C_2|\leq \frac{|C_1|}{2}+\frac{|C_2|}{2}-2,
$$
which is impossible. Assume then that  
$|J \cap C_1|\geq\frac{|C_1|}{2}$.
By Lemma~\ref{prop: char parity joins}, it follows that
 $|J \cap C_1|=\frac{|C_1|}{2}$ and
$J \Delta C_1$ is a minimum cardinality $(T,p)$-join. 
Let us show that the sequence $J$, $J \Delta C_1$, $K$ gives the equivalence of $J$ and $K$,
concluding the proof. 
We must check that 
$J \setminus C_1 \neq \emptyset$ and
$(C_1 \setminus J)\cap K \neq \emptyset.$
If $J \setminus C_1 = \emptyset$, we get 
$$\textstyle\frac{|C|}{2}=|J|=|J \cap C_1|=\frac{|C_1|}{2},$$
and so  $|C_2|=2$.
This contradicts that nonempty Eulerian sets of even cardinality must have cardinality at least $4$, thus implying that
$J \setminus C_1 \neq \emptyset$.
Now, to see that
$(C_1 \setminus J)\cap K \neq \emptyset$, note that
$$\textstyle|C_1 \setminus J|=|C_1|-|J\cap C_1 |= \frac{|C_1|}{2}\geq 2.$$
So $C_1 \setminus J$ contains the single edge in $C_1\cap C_2$ and at least one other 
edge of $C$, that must belong to $K$.
\end{proof}

\begin{theorem}\label{thrm: complete graphs}
If $G$ is a complete graph, $\dd(I(G))=2$. 
\end{theorem}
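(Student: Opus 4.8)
The plan is to reduce the statement, via Corollary~\ref{cor: characterization of minimal generating degrees}, to a claim about joins. Since the minimal generating set always contains the degree-$2$ binomials $t_e^2-t_\ell^2$, we have $\dd(I(G))\geq 2$, and it suffices to show that every $(T,p)\in\TG^*$ has a minimum cardinality $(T,p)$-join of cardinality at most $2$. I would argue by contradiction: suppose $(T,p)\in\TG^*$ has two nonequivalent minimum cardinality joins $J,K$ with $|J|=|K|\geq 3$, and aim to conclude $J\sim K$. First note that nonequivalent joins are disjoint — if $J\cap K\neq\emptyset$ the length-one sequence $J,K$ already witnesses $J\sim K$ — so $C:=J\dcup K$ is a genuine disjoint union with $|C|=2|J|\geq 6$, and Lemma~\ref{lem: even-chord} is applicable to it. By Lemma~\ref{lem: lemma on (T,p)}, $C$ is either an even cycle or the edge-disjoint union of two odd cycles meeting in at most one vertex.

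The guiding idea is that, because $G=K_n$ is complete, \emph{every} edge between two vertices is present, so any chord that would split $C$ into even Eulerian pieces actually exists, and Lemma~\ref{lem: even-chord} then yields the contradiction $J\sim K$. If $C$ is an even cycle $v_1v_2\cdots v_{2m}$ with $2m\geq 6$, the edge $\{v_1,v_4\}\in E_{K_n}$ is an even-chord, since it splits $C$ into the even cycles $v_1v_2v_3v_4v_1$ and $v_1v_4v_5\cdots v_{2m}v_1$. If $C=C_1\dcup C_2$ is a union of two odd cycles and one of them, say $C_1$, has length $\geq 5$, then $C_1$ has a chord $\ell$ in $K_n$ splitting $C_1=E\mathbin{\Delta}O$ into an even cycle $E$ and an odd cycle $O$ with $E\cap O=\{\ell\}$; I would then check that $A=E$ and $B=O\cup C_2$ are even cardinality Eulerian sets with $A\cap B=\{\ell\}$ and $A\mathbin{\Delta}B=C$ (using that $C_2$ is edge-disjoint from $C_1$ and from $\ell$), so $\ell$ is an even-chord of $C$. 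The last case in this branch is that both odd cycles are triangles; if they share a vertex (a bowtie on $\{w,a_1,a_2\}$ and $\{w,b_1,b_2\}$), I would exhibit the explicit even-chord $\{a_1,b_1\}$, realized by the squares $wa_1b_1b_2w$ and $a_1a_2wb_1a_1$, whose symmetric difference is $C$ and whose intersection is $\{a_1,b_1\}$.

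The one configuration the even-chord machinery cannot reach, which I expect to be the main obstacle, is that of two \emph{vertex-disjoint} triangles $C_1,C_2$: here $C$ has \emph{no} even-chord, since any edge outside $C$ would join the two triangles and hence be a bridge, lying on no cycle. For this case I would instead produce $J\sim K$ directly, exploiting that $n\geq 6$. Up to symmetry there are only two ways to split $C_1\dcup C_2$ into two $3$-edge joins. Either $\{J,K\}=\{C_1,C_2\}$, in which case $T=\emptyset$, $p=1$, and it suffices to link the two triangles by a chain of triangles in $K_n$ each sharing an edge with the next (e.g.\ $\{a_1,a_2,a_3\},\{a_1,a_2,b_3\},\{a_1,b_2,b_3\},\{b_1,b_2,b_3\}$); or $J$ meets $C_1$ in exactly one edge, in which case $|T|=4$ and every minimum $(T,1)$-join has three edges and is either a star centred at a vertex of $T$ or a matching edge together with a path through an external vertex. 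Any two stars share a common edge of $K_n$, and every path-type join shares its matching edge with a star, so all such joins are equivalent, whence $J\sim K$. As every case yields a contradiction, no $(T,p)\in\TG^*$ has a minimum join of cardinality $\geq 3$; thus all minimal generators have degree $2$ and $\dd(I(G))=2$.
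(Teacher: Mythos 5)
Your proof is correct, and its skeleton is the paper's: reduce via Corollary~\ref{cor: characterization of minimal generating degrees} to bounding nonequivalent minimum cardinality $(T,p)$-joins, then combine Lemma~\ref{lem: lemma on (T,p)} with Lemma~\ref{lem: even-chord}; your even-cycle case and your bowtie case (an edge joining the two cycles away from the common vertex) also appear, in essence, in the paper. The genuine difference is the treatment of two vertex-disjoint odd cycles. The paper never confronts the even-chord-free configuration at all: it uses a swap, choosing adjacent edges $\{u,i\},\{i,v\}\in J$ in one cycle and a vertex $j$ of the other, so that the square $D=\{\{u,i\},\{i,v\},\{v,j\},\{j,u\}\}$ produces an equivalent minimum join $J\Delta D$ for which $(J\Delta D)\dcup K$ is a union of two odd cycles meeting at $j$, and the one-common-vertex case then applies. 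You instead peel off cycles of length at least $5$ using chords (in the spirit of Remarks~\ref{rmks: remark on even-chords}(iii)) and then attack the only truly even-chord-free configuration, two vertex-disjoint triangles, by enumerating $\mJ(T,p)$ outright: triangles chained edge-to-edge when $(T,p)=(\emptyset,1)$, and stars or matching-plus-path joins when $|T|=4$, checking that the overlap relation connects them all. Both routes are complete. The paper's swap is shorter and uniform over all vertex-disjoint pairs of odd cycles; your enumeration costs more case analysis but is self-contained at that point, and it makes explicit why the bound of Theorem~\ref{thrm: d(IG)=d} fails to be sharp for complete graphs --- equivalence of joins can hold with no even-chord present --- a point the paper records only by example after that theorem. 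Your opening observation that nonequivalent joins are automatically disjoint is also worth keeping; the paper uses it tacitly whenever it writes $J\dcup K$.
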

\begin{proof}
By Corollary~\ref{cor: characterization of minimal generating degrees}, it 
suffices to show that, if $J$ and $K$ are two nonequivalent minimum cardinality $(T,p)$-joins, then $|J|=|K|\leq 2$.
Assume, with a view to a contradiction, that 
$|J|=|K|\geq 3$.
By Lemma~\ref{lem: lemma on (T,p)}, there are three cases to consider.
If $J\dcup K$ is an even cycle, since $|J\dcup K|\geq 6$ and $G$ is the complete graph, $J\dcup K$ has
an even-chord and,
by Lemma~\ref{lem: even-chord}, $J$ is equivalent to $K$.
If $J\dcup K$ is the edge-disjoint union of two odd cycles, with exactly one vertex in common, say $i\in V_G$, 
then, for any vertices $u,v$ of $J\dcup K$, different from $i$, the edge $\{u,v\}$ is 
an even-chord of $G$.
Again, the result follows, by Lemma~\ref{lem: even-chord}. 
For the last case, $J\dcup K$ is the edge-disjoint union of two odd cycles, $C_1,C_2$, with no vertices in common.
Then $J$ must contain at least half of the edges of one of the two odd cycles, say $C_1$.
In particular, this means that $J$ contains two adjacent edges of $C_1$, 
say $\{u,i\},\{i,v\}\in J$, as in Figure~\ref{fig: Vertex-disjoint triangle and pentagon, $J$ in bold}.
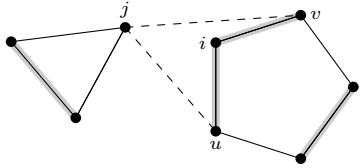
\begin{figure}[ht]
\begin{center}
\begin{tikzpicture}

\coordinate (P1) at (-1,0.79);
\coordinate (P2) at (-1.65,0.79-1.2);
\coordinate (P3) at (-2.5,0.6);
\coordinate (P4) at (1+1,0);
\coordinate (P5) at (1+0.30901699,0.95105651);
\coordinate (P6) at (1+-0.80901699,0.58778525);
\coordinate (P7) at (1+-0.80901699,-0.58778525);
\coordinate (P8) at (1+0.30901699,-0.95105651);

\draw [gray!40!white, line width=3pt] (P2)--(P3);
\draw [gray!40!white, line width=3pt] (P5)--(P6);
\draw [gray!40!white, line width=3pt] (P6)--(P7);
\draw [gray!40!white, line width=3pt] (P4)--(P8);

\draw (P1)--(P2);
\draw (P1)--(P2);
\draw (P1)--(P3);
\draw (P2)--(P3);
\draw (P4)--(P5);
\draw (P5)--(P6);
\draw (P6)--(P7);
\draw (P7)--(P8);
\draw (P4)--(P8);
\draw [dashed] (P1)--(P5);
\draw [dashed] (P1)--(P7);

\draw (P5)--(P6);
\draw (P6)--(P7);
\draw (P4)--(P8);

\foreach \i in {1,...,8} {\fill[color=black] (P\i) circle (2pt);}

\foreach \i in {1} {\draw  (P\i) node[anchor=south] {$\scriptstyle j$};}
\foreach \i in {5} {\draw  (P\i) node[anchor=west] {$\scriptstyle v$};}
\foreach \i in {6} {\draw  (P\i) node[anchor=east] {$\scriptstyle i$};}
\foreach \i in {7} {\draw  (P\i) node[anchor=north] {$\scriptstyle u$};}

\end{tikzpicture}
\end{center}
\caption{Vertex-disjoint triangle and pentagon, $J$ in gray.} 
\label{fig: Vertex-disjoint triangle and pentagon, $J$ in bold}
\end{figure} 
Fix a vertex $j$ of $C_2$ and let $D$ be the square $\{\{u,i\},\{i,v\},\{v,j\},\{j,u\}\}$.
Then $J\Delta D$ is a minimum cardinality $(T,p)$-join, equivalent to $J$, and 
$(J\Delta D) \dcup K$ is the edge-disjoint union of two odd cycles with exactly 
the vertex $j$ in common.
By the previous case, $(J\Delta D) \dcup K$ has an even-chord and 
$J\sim J\Delta D \sim K$.
\end{proof}

Below we adapt part of the proof of
\cite[Theorem 1.2]{OH1999}.

\begin{theorem}\label{thrm: d(IG)=d}
Let $2d$ be the greatest cardinality of an even cycle without an even-chord, 
or an edge-disjoint union of two odd cycles with at most one vertex in common without an even-chord, of $G$.
Then $\dd(I(G))\leq d,$ and equality holds if $G$ is bipartite.
\end{theorem}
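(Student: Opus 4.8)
The plan is to prove the two halves separately: the bound $\dd(I(G))\le d$ for an arbitrary $G$, and the reverse inequality when $G$ is bipartite. For the upper bound I would start from Corollary~\ref{cor: characterization of minimal generating degrees}, by which every minimal generator has degree $2$ or degree $|J|$ for a minimum cardinality $(T,p)$-join $J$ with $(T,p)\in\TG^*$. Fixing such a $(T,p)$ and two nonequivalent minimum cardinality joins $J,K$, the first point to record is that $J\cap K=\emptyset$: were they to meet, the length-one sequence $J,K$ would already witness $J\sim K$ by Definition~\ref{def: equivalent m c (T,p)-joins}. Consequently $C:=J\dcup K$ has $|C|=2|J|$, and Lemma~\ref{lem: even-chord} shows $C$ has no even-chord, since otherwise $J\sim K$. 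Lemma~\ref{lem: lemma on (T,p)} then forces $C$ to be an even cycle, or an edge-disjoint union of two odd cycles with at most one common vertex, that is, one of the configurations defining $2d$; hence $|C|\le 2d$ and $|J|\le d$. As any such configuration has at least four edges, $d\ge 2$, so the degree-$2$ generators are subsumed and $\dd(I(G))\le d$.

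For the equality when $G$ is bipartite, the plan is to produce a single pair $(T,p)\in\TG^*$ whose minimum joins have cardinality exactly $d$. Since a bipartite graph has no odd cycles, the configurations defining $d$ are only even cycles without even-chord; let $C=v_1v_2\cdots v_{2d}v_1$ be one of maximal length. I would set $T=V(C)$ and let $J$ and $K$ be the two alternating matchings of $C$, so that each is a perfect matching of $T$, both are $(T,p)$-joins with $p=d+2\ZZ$, $|J|=|K|=d$, and $J\cap K=\emptyset$.

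The heart of the argument is to show $\mJ(T,p)=\{J,K\}$. First I would note the general bound $|L|\ge |T|/2=d$ for any $(T,p)$-join $L$, from $2|L|=\sum_v\deg_L(v)\ge\sum_{v\in T}\deg_L(v)\ge |T|$; since $J$ attains it, $d$ is the minimum cardinality and every $L\in\mJ(T,p)$ must satisfy $2|L|=|T|$, forcing $\deg_L(v)=1$ for $v\in T$ and $\deg_L(v)=0$ otherwise. Thus each minimum join is a perfect matching of $T$ built from edges of the induced subgraph $G[T]$. The decisive observation, and the step I expect to be the main obstacle, is that $G[T]=C$: by Remark~\ref{rmks: remark on even-chords}(i) every chord of a cycle in a bipartite graph is an even-chord, so the even-chord-free cycle $C$ has no chords at all and $G[V(C)]$ collapses to $C$ itself. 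Since a $2d$-cycle has exactly two perfect matchings, namely $J$ and $K$, we conclude $\mJ(T,p)=\{J,K\}$.

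Finally, because $J$ and $K$ are disjoint and are the only minimum joins, no chain of pairwise-intersecting minimum joins can link them, so $J\not\sim K$ and $(T,p)\in\TG^*$ with minimum join cardinality $d$. Corollary~\ref{cor: characterization of minimal generating degrees} then yields $d$ as a generating degree, giving $\dd(I(G))\ge d$, and together with the upper bound $\dd(I(G))=d$. The routine parts are the upper bound and the cardinality count; the genuine content is the identification $G[T]=C$ and the resulting exact determination of $\mJ(T,p)$.
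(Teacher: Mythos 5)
Your proof is correct. The first half coincides with the paper's: both deduce $\dd(I(G))\le d$ from Corollary~\ref{cor: characterization of minimal generating degrees} together with Lemmas~\ref{lem: lemma on (T,p)} and~\ref{lem: even-chord} (your explicit observations that nonequivalent minimum joins must be disjoint, and that $d\ge 2$ so the quadratic generators are subsumed, are points the paper leaves implicit). The bipartite equality, however, you prove by a genuinely different route. The paper argues by contradiction on the algebraic side: if $\dd(I(G))<d$, then the binomial $\bt_J-\bt_K$ attached to the alternating partition of an even-chord-free $2d$-cycle $C$ would be a $\KK[E_G]$-linear combination of lower-degree binomials, and unwinding how monomials cancel in that combination produces an even cycle $D\neq C$ whose edges outside $C$ are chords of $C$, contradicting chordlessness. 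You instead determine the fiber exactly: the handshake bound $2|L|\ge|T|$ shows every minimum cardinality $(T,p)$-join is a perfect matching of $T=V(C)$ inside the induced subgraph $G[T]$; since in a bipartite graph an even-chord-free cycle is chordless, $G[T]=C$, whose only perfect matchings are the two alternating classes $J$ and $K$; as these are disjoint, no chain of pairwise-intersecting minimum joins can connect them, so $J\not\sim K$, $(T,p)\in\TG^*$, and Corollary~\ref{cor: characterization of minimal generating degrees} yields $\dd(I(G))\ge d$. Your version stays entirely inside the combinatorial framework of joins and the characterization of minimal generators, and it avoids the paper's somewhat terse step \enquote{it follows that there is an even cycle $D\neq C$\dots}, which tacitly reruns part of the cancellation analysis from Theorem~\ref{thrm: necessary condition for minimal binomials}; the paper's version, in exchange, never needs the exact identification $\mJ(T,p)=\{J,K\}$, only the existence of one offending cycle. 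Both arguments are sound; yours is arguably the more self-contained given the results already established in the paper.
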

\begin{proof}
By Corollary~\ref{cor: characterization of minimal generating degrees}, 
$\dd(I(G))$ is the greatest cardinality of a minimum cardinality $(T,p)$-join, 
among all pairs $(T,p)\in \TG^*$.
Then, by Lemmas~\ref{lem: lemma on (T,p)} and \ref{lem: even-chord}, 
$2\dd(I(G))$ is the cardinality of an even cycle without an even-chord, or 
an edge-disjoint union of two odd cycles with at most one vertex in common without an even-chord, of $G$. 
Thus  \mbox{$\dd(I(G))\leq d.$}
Now, if $G$ is bipartite, let $C$ be a chordless even cycle with $|C|=2d$.
Set $T$ as the set of vertices of $C$ and $p=d+2\ZZ$.
Let $C=J\dcup K$ be the partition of $C$ into two sets, of cardinality $d$, 
taking for $J$ every other edge of $C$.
Then $J$ and $K$ are $(T,p)$-joins.
Take the binomial $\bt_J-\bt_K\in I(G)$.
If $\dd(I(G))< d,$ then $\bt_J-\bt_{K}$ is a $\KK[E_G]$-linear combination of binomials of lower degree. 
It follows that there is an even cycle $D\neq C,$ and a binomial $\bt_A-\bt_{D\setminus A} \in I(G)$ such that 
$A\subseteq J$.
Then, since no edges of $J$ are incident, the same applies to $A$ and 
consequently to $D\setminus A$.
Therefore, as $A\subseteq C$, every vertex of $D$ is 
a vertex of $C$ and every edge in $D\setminus C$ is a chord of $C$, which is a contradiction.
We conclude that $\dd(I(G))= d.$
\end{proof}

Recall that a bipartite graph is called bipartite chordal if every cycle of length greater than $4$ 
has a chord.

\begin{corollary}\label{cor: chordal bipartite graphs}
If $G$ is bipartite, $\dd(I(G))=2$ if and only if 
$G$ is bipartite chordal.
\end{corollary}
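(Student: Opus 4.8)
The plan is to specialize Theorem~\ref{thrm: d(IG)=d} to the bipartite setting and then read off the equivalence. First I would use bipartiteness twice to simplify the combinatorial quantity $2d$ appearing in that theorem. Since a bipartite graph has no odd cycles, the ``edge-disjoint union of two odd cycles'' subgraphs never occur, so $2d$ is simply the greatest cardinality of an even cycle of $G$ without an even-chord. Moreover, by Remark~\ref{rmks: remark on even-chords}$(i)$, in a bipartite graph every chord of a cycle separates it into two even cycles and is therefore an even-chord; hence an even cycle is free of even-chords precisely when it is chordless. Consequently $2d$ is the greatest length of a chordless even cycle of $G$, and Theorem~\ref{thrm: d(IG)=d} gives $\dd(I(G))=d$.

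Next I would turn the condition $\dd(I(G))=2$ into a statement about cycles. Since $t_e^2-t_\ell^2\in I(G)$, we always have $\dd(I(G))\geq 2$, so $\dd(I(G))=2$ is equivalent to $d\leq 2$, that is, to the absence of a chordless even cycle of length at least $6$. In the degenerate case where $G$ has no even cycle at all, for instance a forest, there is no $(T,p)\in\TG^*$ and $\dd(I(G))=2$ by Corollary~\ref{cor: characterization of minimal generating degrees}; such a graph is vacuously bipartite chordal, so this is consistent with the claim.

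It then remains to check the two implications. If $G$ is bipartite chordal, every cycle of length greater than $4$ has a chord; as all cycles of $G$ are even of length at least $4$, no chordless cycle has length $\geq 6$, so $d\leq 2$ and $\dd(I(G))=2$. Conversely, assume $\dd(I(G))=2$, so that there is no chordless even cycle of length at least $6$. Any cycle of length greater than $4$ is even of length at least $6$, and hence must carry a chord, which is exactly the bipartite chordal condition.

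The main obstacle is not depth but care: the only genuine content lies in the reduction of the first paragraph, namely recognizing through Remark~\ref{rmks: remark on even-chords}$(i)$ that ``even-chord'' collapses to ``chord'' for bipartite graphs and that the odd-cycle configurations disappear, together with correctly handling the degenerate graphs (forests, and more generally graphs with $\TG^*=\emptyset$), where Theorem~\ref{thrm: d(IG)=d} must be supplemented by Corollary~\ref{cor: characterization of minimal generating degrees} to guarantee $\dd(I(G))=2$.
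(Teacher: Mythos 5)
Your proposal is correct and takes essentially the same route as the paper's proof: specialize Theorem~\ref{thrm: d(IG)=d} to the bipartite case, use Remarks~\ref{rmks: remark on even-chords}$(i)$ to identify even-chords with chords (and note that the odd-cycle configurations cannot occur), and then read off that $\dd(I(G))=d=2$ exactly when $G$ is bipartite chordal. Your extra treatment of the degenerate case where $G$ has no cycles at all, via Corollary~\ref{cor: characterization of minimal generating degrees} and $\TG^*=\emptyset$, is a point the paper's one-line proof passes over silently, but it is a refinement of the same argument rather than a different method.
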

\begin{proof}
Since $G$ is bipartite, every chord is an even-chord.
Let $2d$ be the greatest cardinality of an even cycle, of $G$, without a chord.
By Theorem~\ref{thrm: d(IG)=d}, 
$\dd(I(G))=d$ and $d=2$ if and only if $G$ is chordal bipartite. 
\end{proof}

If $G$ is the complete graph on $n\geq6$ vertices, an
edge-disjoint union of two triangles, with no common vertices, does not have an even-chord, as we mentioned in $(iii)$ of Remarks~\ref{rmks: remark on even-chords}.
However, by
Theorem~\ref{thrm: complete graphs}, $\dd(I(G))=2$ and hence the inequality in 
Theorem~\ref{thrm: d(IG)=d} is  not sharp. The following is another example of this.

\begin{example}\label{exa: strict inequality}
Let $G$ be the graph depicted in Figure~\ref{fig: A nonbipartite graph}.
 \begin{figure}[ht]
\begin{center}
\begin{tikzpicture}

\coordinate (P1) at (.5,0.8660254038);
\coordinate (P2) at (1,0);
\coordinate (P3) at (.5,-0.8660254038);
\coordinate (P4) at (-.5,-0.8660254038);
\coordinate (P5) at (-1,0);
\coordinate (P6) at (-.5,0.8660254038);

\draw (P1)--(P2);
\draw (P1)--(P6);
\draw (P2)--(P3);
\draw (P3)--(P4);
\draw (P4)--(P5);
\draw (P5)--(P6);
\draw (P1)--(P3);
\draw (P1)--(P5);
\draw (P3)--(P5);
\draw (P2)--(P4);
\draw (P2)--(P6);
\draw (P4)--(P6);

\foreach \i in {1,...,6} {\fill[color=black] (P\i) circle (2pt);}

\foreach \i in {1} {\draw  (P\i) node[anchor=south] {$\scriptstyle \i$};}
\foreach \i in {2} {\draw  (P\i) node[anchor=west] {$\scriptstyle \i$};}
\foreach \i in {3} {\draw  (P\i) node[anchor=north] {$\scriptstyle \i$};}
\foreach \i in {4} {\draw  (P\i) node[anchor=north] {$\scriptstyle \i$};}
\foreach \i in {5} {\draw  (P\i) node[anchor=east] {$\scriptstyle \i$};}
\foreach \i in {6} {\draw  (P\i) node[anchor=south] {$\scriptstyle \i$};}

\end{tikzpicture}
\end{center}
\caption{A non-bipartite graph.} 
\label{fig: A nonbipartite graph}
\end{figure}
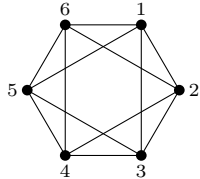
Using Macaulay2, one can check that $\dd(I(G))=2$.
However, the hexagon 
$$\{\{1,2\},\{2,3\},\{3,4\},\{4,5\},\{5,6\},\{1,6\}\}$$ does not have an even-chord, as no edge of $E_G \setminus C$ separates $C$ in two squares.
\end{example}

If $G$ is bipartite, then, by Theorem~\ref{thrm: d(IG)=d} and \cite[Lemmas 3.1, 3.2 and Theorem 1.2]{OH1999},
the maximal generating degree of $I(G)$ and the maximal generating degree of the toric ideal of $G$ coincide. This 
number is half of the length of a largest chordless even cycle.
However, in the non-bipartite case, this does not occur in general. 
Using Macaulay2 one can check that for the graph in Figure~\ref{fig: A nonbipartite graph}, 
the toric ideal of $G$ is generated in degree $2$, but $\dd(I(G))=3$.

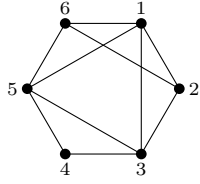
\begin{figure}[h]
\begin{center}
\begin{tikzpicture}

\coordinate (P1) at (.5,0.8660254038);
\coordinate (P2) at (1,0);
\coordinate (P3) at (.5,-0.8660254038);
\coordinate (P4) at (-.5,-0.8660254038);
\coordinate (P5) at (-1,0);
\coordinate (P6) at (-.5,0.8660254038);

\draw (P1)--(P2);
\draw (P1)--(P6);
\draw (P2)--(P3);
\draw (P3)--(P4);
\draw (P4)--(P5);
\draw (P5)--(P6);
\draw (P1)--(P3);
\draw (P1)--(P5);
\draw (P3)--(P5);
\draw (P2)--(P6);

\foreach \i in {1,...,6} {\fill[color=black] (P\i) circle (2pt);}

\foreach \i in {1} {\draw  (P\i) node[anchor=south] {$\scriptstyle \i$};}
\foreach \i in {2} {\draw  (P\i) node[anchor=west] {$\scriptstyle \i$};}
\foreach \i in {3} {\draw  (P\i) node[anchor=north] {$\scriptstyle \i$};}
\foreach \i in {4} {\draw  (P\i) node[anchor=north] {$\scriptstyle \i$};}
\foreach \i in {5} {\draw  (P\i) node[anchor=east] {$\scriptstyle \i$};}
\foreach \i in {6} {\draw  (P\i) node[anchor=south] {$\scriptstyle \i$};}

\end{tikzpicture}
\end{center}
\caption{Another non-bipartite graph.} 
\label{fig: Another nonbipartite graph}
\end{figure}

\end{document}